\newcommand{\N}[0]{\mathbb{N}}
\newcommand{\Z}[0]{\mathbb{Z}}
\newcommand{\F}[0]{\mathbb{F}}
\DeclareMathOperator{\constantterm}{ct}
\newcommand{\ct}[1]{\constantterm\left[#1\right]}
\DeclareMathOperator{\coefficient}{coef}
\newcommand{\coef}[2]{\coefficient_{#1}\left[#2\right]}
\newcommand{\x}{\mathbf{x}}
\DeclareMathOperator{\code}{code}
\newtheorem{claim}{Claim}
\newtheorem{conj}{Conjecture}
\newtheorem{question}{Question}
\newtheorem{thm}[claim]{Theorem}
\newtheorem{lemma}[claim]{Lemma}
\newtheorem{prop}[claim]{Proposition}
\newtheorem{cor}[claim]{Corollary}
\theoremstyle{definition}
\newtheorem{definition}{Definition}
\theoremstyle{remark}
\theoremstyle{definition}
\title{A Linear Representation for Constant Term Sequences mod $p^a$ with Applications to Uniform Recurrence}
\author{Nadav Kohen}
\affil{Indiana University\\Bloomington, IN, USA\\nkohen@iu.edu}
\date{February 14, 2025}
\begin{document}
\maketitle

\begin{abstract}
Many integer sequences including the Catalan numbers, Motzkin
numbers, and the Apr{\'e}y numbers can be expressed in the form ConstantTermOf$\left[P^nQ\right]$ for Laurent polynomials $P$ and $Q$. These are often called ``constant term sequences''. In this paper, we characterize the prime powers, $p^a$, for which sequences of this form modulo $p^a$, and others built out of these sequences, are uniformly recurrent. For all other prime powers, we show that the frequency of $0$ is $1$. This is accomplished by introducing a novel linear representation of constant term sequences modulo $p^a$, which is of independent interest.
\end{abstract}

\section{Introduction}
Numerous famous combinatorial sequences admit descriptions of the form $s_n = \ct{P^nQ}$ where $P$ and $Q$ are integral Laurent polynomials (of possibly many variables) and ct extracts the constant term of a Laurent polynomial. For example, the Catalan numbers can be described by $C_n = \ct{(x^{-1} + 2 + x)^n(1-x)}$, the Motzkin numbers can be described by $M_n = \ct{(x^{-1} + 1 + x)^n(1-x^2)}$, and the Apr{\'e}y numbers (which were used to prove the irrationality of $\zeta(3)$) can be described by $A(n) = \ct{\left(\frac{(1+x)(1+y)(1+z)(1+y+z+yz+xyz)}{xyz}\right)^n}$.\\

This paper is primarily focused on showing when sequences mod prime powers are uniformly recurrent, which can be thought of as a weaker form of periodicity:
\begin{definition} A sequence $s_n$ is called \emph{uniformly recurrent} if for every word in $s_n$ (i.e., contiguous subsequence), $w = s_is_{i+1}\cdots s_{i+\ell-1}$, there is a constant $C_w$ such that every occurrence of $w$ is followed by another occurrence of $w$ at a distance of at most $C_w$. I.e., there is a $j\leq C_w$ such that $w = s_{i+j}s_{i+j+1}\cdots s_{i + j + \ell - 1}$. If for all $w$, $C_w$ is bounded by a constant multiple of the length of $w$, then we say that $s_n$ is \emph{linearly recurrent}.
\end{definition}
For background on uniform recurrence in automatic sequences, see Section 10.9 of \cite{Allouche_Shallit_Book} and Section 10.8.8 of \cite{ShallitBook}.\\

In \cite{rowlandzeilberger}, Rowland and Zeilberger give an algorithm for constructing a Deterministic Finite Automaton with Output (DFAO) that computes such constant term sequences modulo a prime power. In \cite{CongruenceAutomaton}, Rampersad and Shallit use these DFAOs in concert with the Walnut library \cite{walnut} to automatically prove theorems about these sequences using computers. In Theorem 10 of that paper the authors prove that, under certain conditions, the sequence $T_n = \ct{(x^{-1} + 1 + x)^n}\bmod p$ is uniformly recurrent if and only if it is never $0$. They then pose Problem 6 in which they conjecture that the Motzkin numbers mod $p$ are uniformly recurrent if and only if $T_n\bmod p$ is never $0$. In \cite{KohenUR}, this author showed that this conjecture is true and that more generally that if $P$ is a symmetric trinomial in one variable, then $\ct{P^nQ}\bmod p$ is uniformly recurrent for every single-variable $Q$ if and only if $\ct{P^n}\bmod p$ is never $0$ (which can be checked by finite means, see Proposition \ref{check_bound}).\\

In Theorem \ref{main_power} and Corollary \ref{combinations_power} of Section 5 of this paper, we extend the results of \cite{KohenUR} to show that $\ct{P^nQ}\bmod p^a$ and every sequence built up of such sequences is linearly recurrent if and only if $\ct{P^n}\bmod p$ is never $0$, and here, no constraints are put on $P$ and both polynomials may have any number of variables. Furthermore, if there exists an $n$ such that $\ct{P^n}\bmod p = 0$ then the frequency of $0$ is $1$ in every $\ct{P^nQ}\bmod p^a$ and every sequence built up of such sequences. (The frequency of a value, $c$, in a sequence, $(s_n)_{n\in\N}$, is equal to the asymptotic density of $\{n\in\N\mid s_n = c\}$).\\

This is accomplished by generalizing the proof method of Theorem 10 of \cite{CongruenceAutomaton} by using constructions from Shallit's book \emph{The Logical Approach to Automatic Sequences} \cite{ShallitBook} along with some novel constructions introduced in Sections 3 and 4. In Section 3, we construct a $p$-linear representation (Definition \ref{linrepdef}) for sequences of the form $\ct{P^nQ}\bmod p$. This $p$-linear representation is in some sense equivalent to the Rowland-Zeilberger Automaton, but it makes many properties of these sequences more apparent and allows us to do away with an assumption made in Theorem 10 of \cite{CongruenceAutomaton}. More generally, this $p$-linear representation seems to be of independent interest as it generalizes a family of Lucas congruences specified in Proposition 1 of \cite{KohenUR}, is amenable to efficient computation associated with constant term sequences mod $p$, and is largely independent of $Q$. This construction is generalized to constant term sequences modulo prime powers in Section 5.

\subsection{Notation and Conventions}
Throughout this paper, $P$ and $Q$ denote Laurent polynomials in $r$ variables with integer coefficients, $\coef{k_1,\ldots,k_r}{Q}$ denotes the coefficient of $x_1^{k_1}\cdots x_r^{k_r}$, $\ct{Q}$ denotes the constant term of $Q$, and $\deg Q$ denotes the largest absolute value of any exponent on any single variable in $Q$ (for example, $\deg(5x^{-3}y^{-2} + 2xy^{-1}) = 3$). For a fixed $P$, we let $a^n_{k_1,\ldots,k_r}$ denote $\coef{-k_1,\ldots,-k_r}{P(x_1,\ldots,x_r)^n} = \ct{P(x_1,\ldots,x_r)^n\cdot x_1^{k_1}\cdots x_r^{k_r}}$ so that the $(r+1)$-dimensional array $a^n_{k_1,\ldots,k_r}$ forms an $(r+1)$-dimensional pyramid that generalizes Pascal's triangle (which is achieved when $P = x^{-1} + x$). To make notation less verbose, we use the bold $\x^{k_1,\ldots,k_r}$ to refer to $x_1^{k_1}\cdots x_r^{k_r}$ and when $k = (k_1,\ldots, k_r)\in \Z^r$, we simply use $\x^k$. Furthermore, for such $k$ we denote by $p\mid k$ that $p$ divides every entry of $k$. Unless otherwise specified, vectors are column vectors and $\vec{u}^{tr}$ is the transpose of $\vec{u}$ (which is a row if $\vec{u}$ is a column).\\

If $\Sigma$ is a set, $\Sigma^*$ denotes the set of words (i.e., strings) of any length whose characters are from $\Sigma$ (including the empty word), and let $\vert w\vert$ denote the length of a word in $w\in\Sigma^*$. If $n$ is a non-negative integer, and $p$ is a prime, then let $n_p\in\F_p^*$ be the word whose characters are the digits of $n$ in base $p$, with the least significant digit first. That is, if we let $n_p[i]$ denote the $i$th digit in the base-$p$ expansion of $n$ so that $n = \sum_{i\in\Z_{\geq 0}} n_p[i]p^i$, then $n_p = (n_p[0])(n_p[1])\cdots(n_p\left[\vert n_p\vert-1\right])$. Please note that in this author's previous paper \cite{KohenUR}, most-significant-digit first (the reverse of this convention) was used, but here we use least-significant-digit first to more closely follow constructions in \cite{ShallitBook}. Also note that when working with strings, exponents denote repetition. For example, $(p-1)^k\in\F_p^*$ denotes a run of $k$ characters that are all the character $(p-1)$. Lastly, note that every statement made in this paper about $n_p$ should also hold for $n_p0^k$ for every $k$.\\

All \emph{morphisms} in this paper refer to morphisms of strings, which are maps, $f: \Sigma_1^*\rightarrow\Sigma_2^*$, such that $f(c_1c_2\cdots) = f(c_1)f(c_2)\cdots$, which can be defined by their action on the elements of $\Sigma_1$. If for every $c\in\Sigma_1$, $\vert f(c)\vert = \ell$ is constant then we say that $f$ is \emph{$\ell$-uniform}. If a morphism is $1$-uniform we call it a \emph{coding}. We say that a morphism, $f:\Sigma^*\rightarrow\Sigma^*$, is \emph{prolongable} on $c\in\Sigma$ if $f(c) = cw$ where $w\in \Sigma^*$ and $f^n(w)$ is non-empty for all $n\geq 0$; in this case, $f^n(c) = cwf(w)f^2(w)\cdots f^{n-1}(w)$ for every $n\geq 0$ and the infinite word $f^\omega(c) = cwf(w)f^2(w)\cdots$ is a fixed point of $f$ (i.e., $f(f^\omega(c)) = f^\omega(c)$).\\

A sequence, $a_n$, is \emph{$p$-automatic} if it has values in some finite alphabet, and there exists a Deterministic Finite Automaton with Output (DFAO) that, given an index $n$ in its base-$p$ representation, $n_p$, ends on a state whose output is $a_n$.\\

A \emph{$p$-linear representation} of rank $m$ of a sequence $a_n$ over $\F_p$ is a triple $(v,\gamma, w)$ consisting of row vector from $\F_p^m$, a matrix-valued morphism, and column vector from $\F_p^m$, respectively. The string morphism $\gamma: \F_p^*\rightarrow \text{Mat}_{m\times m}(\F_p)$ uses matrix multiplication instead of concatenation in its co-domain, such that $v\cdot\gamma(n_p)\cdot w \equiv a_n\pmod p$ for all $n\in\N$. We frequently replace the domain $\F_p$ above with $\Z/p^a\Z$ except in the input alphabet of $\gamma$, which is always $\F_p$. If $a_n$ has such a $p$-linear representation, we say it is \emph{$p$-regular}. Theorem 9.6.1 of \cite{ShallitBook} says that a sequence with finitely many values is $p$-automatic if and only if it is $p$-regular.

\section{Preliminaries}
The Rowland-Zeilberger automaton is wonderfully described, with examples, in \cite{rowlandzeilberger}. Its existence shows that constant term sequences modulo prime powers are $p$-automatic. The general construction is specified in the section titled \emph{Teaching the Computer How to Create Automatic $p$-schemes}. We repeat some of that exposition here:\\

\begin{definition}
For every positive integer $k$, define $\Lambda_k$ to be the map from (and to) Laurent polynomials over $\Z$ (or the induced map on quotient rings) defined by $$\Lambda_k\left(\sum_{i\in [-m,m]^r}q_i\x^i\right) = \sum_{k\mid i}q_i\x^{\frac{i}{k}}.$$ That is, $\Lambda_k$ deletes terms whose exponents are not all multiples of $k$ and then performs the change of variables $x_j^k\mapsto x_j$ for every $j$.
\end{definition}

\begin{lemma}\label{RZ_cong}
For all Laurent polynomials $P$ and $Q$, all primes $p$ and all integers $n$ and $k$, $$\ct{P^{pn+k}\cdot Q}\equiv \ct{P^n\cdot\Lambda_p\left(P^kQ\right)}\pmod p.$$
\end{lemma}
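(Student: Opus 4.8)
The plan is to reduce everything to the Frobenius endomorphism on $\F_p$ and then match constant terms by bookkeeping exponents. First I would recall that for any integer Laurent polynomial $P = \sum_i q_i \x^i$, Fermat's little theorem gives $q_i^p \equiv q_i \pmod p$, so the freshman's dream (Frobenius) yields
$$P^p \equiv \sum_i q_i^p \x^{pi} \equiv \sum_i q_i \x^{pi} \pmod p.$$
Writing $\widehat{P}(\x) := \sum_i q_i \x^{pi}$ for the polynomial obtained by replacing every variable $x_j$ with $x_j^p$, this says $P^p \equiv \widehat{P} \pmod p$, and hence $P^{pn+k} = (P^p)^n P^k \equiv \widehat{P}^{\,n} P^k \pmod p$. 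Multiplying by $Q$ and taking constant terms, it suffices to prove the exact identity $\ct{\widehat{P}^{\,n} P^k Q} = \ct{P^n \cdot \Lambda_p(P^kQ)}$ over $\Z$.

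Next I would expand both sides. Write $P^n = \sum_i a_i \x^i$, so that $\widehat{P}^{\,n} = \widehat{P^n} = \sum_i a_i \x^{pi}$ has all exponents divisible by $p$ (raising to the $n$th power and substituting $x_j \mapsto x_j^p$ commute); and write $P^k Q = \sum_j c_j \x^j$. For the left-hand side,
$$\widehat{P}^{\,n} \cdot P^k Q = \sum_{i,j} a_i c_j \x^{pi+j},$$
whose constant term collects exactly the pairs with $pi + j = 0$, i.e. $j = -pi$, giving $\sum_i a_i c_{-pi}$. For the right-hand side, by definition of $\Lambda_p$ we have $\Lambda_p(P^kQ) = \sum_{p \mid j} c_j \x^{j/p} = \sum_\ell c_{p\ell}\x^\ell$, so
$$P^n \cdot \Lambda_p(P^kQ) = \sum_{i,\ell} a_i c_{p\ell}\x^{i+\ell},$$
whose constant term collects the pairs with $i + \ell = 0$, i.e. $\ell = -i$, giving $\sum_i a_i c_{-pi}$. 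The two sums agree term by term.

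The substance of the argument is the observation that, because every exponent appearing in $\widehat{P}^{\,n}$ is a multiple of $p$, the constant term of $\widehat{P}^{\,n} P^k Q$ can only be fed by those terms of $P^kQ$ whose exponents are themselves multiples of $p$ --- which is precisely the set of terms retained by $\Lambda_p$ --- and the division of exponents by $p$ built into $\Lambda_p$ is exactly what converts $\widehat{P}^{\,n}$ back into $P^n$. So I do not expect a real obstacle once the Frobenius step is in place; the only care needed is to keep the multi-index bookkeeping straight (all indices $i, j, \ell$ range over $\Z^r$) and to note that the substitution defining $\Lambda_p$ respects the ring operations. I expect the cleanest write-up to isolate the identity $\ct{\widehat{P}^{\,n} R} = \ct{P^n \Lambda_p(R)}$ for an arbitrary Laurent polynomial $R$ and then specialize to $R = P^kQ$.
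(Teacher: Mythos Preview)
Your proof is correct and follows essentially the same approach as the paper: both hinge on the freshman's dream $P(\x)^p \equiv P(\x^p)\pmod p$ and then the identity $\ct{P(\x^p)^n R} = \ct{P(\x)^n\Lambda_p(R)}$. The only difference is presentational: where the paper dispatches the second step in one line by observing that the change of variables $x_j^p\mapsto x_j$ does not affect constant terms, you verify it by expanding both sides into coefficient sums and matching $\sum_i a_i c_{-pi}$ term by term.
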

\begin{proof}
First note the equivalence often called the generalized Freshman's dream, $P(x_1,\ldots, x_r)^p\equiv P(x_1^p,\ldots,x_r^p)\pmod p$, and then note that the change of variables $x_i^p\mapsto x_i$ does not affect constant terms. From these two observations, it follows that
\begin{align*}
\ct{P(x_1,\ldots, x_r)^{pn+k}\cdot Q(x_1,\ldots, x_r)} &\equiv \ct{P(x_1^p,\ldots,x_r^p)^n\cdot P(x_1,\ldots, x_r)^kQ(x_1,\ldots, x_r)}\\
&= \ct{P(x_1,\ldots, x_r)^n\cdot\Lambda_p\left(P(x_1,\ldots, x_r)^kQ(x_1,\ldots, x_r)\right)}.
\end{align*}
\end{proof}

Using this congruence, we can construct a Deterministic Finite Automaton with Output (DFAO) whose states are labeled with Laurent polynomials by beginning with the state $Q$, and then having transitions labeled $k$ for $k\in\F_p$ that lead to (possibly new) states labeled by $Q' = \Lambda_p\left(P^kQ\right)$, and repeating this process until no new states can be reached (see the following lemma to understand why this terminates). The output of a state labeled $Q$ is $\ct{Q}$. This DFAO is known as the Rowland-Zeilberger automaton, and its construction constitutes a proof that $a_n = \ct{P^nQ}\bmod p$ is a $p$-automatic sequence.

\begin{lemma}\label{deg_bound}
For a fixed $P$, if we define $\Lambda_p^k(Q) = \Lambda_p(P^kQ)$, then $$m := \max(\deg (P) - 1, \deg (Q))\geq \deg\Lambda_p^{k_t}(\cdots \Lambda_p^{k_0}(Q)\cdots ).$$ That is, $m$ is a bound on the degree of every polynomial that is the result of iterating $\Lambda_p^k$ on input $Q$ for all values of $k\in\F_p$. In particular, $m$ bounds the degree of every state, $Q$, appearing in the Rowland-Zeilberger construction.
\end{lemma}
\begin{proof}
If $Q_0 = \sum_{i\in[-m_0,m_0]^r} q_i\x^i$ is of degree $m_0$, then for all $k\leq p-1$, $\Lambda_p^k$ is $\F_p$-linear so that $Q_1 = \Lambda_p^k(Q_0) = \sum_{i\in[-m_0,m_0]^r}q_i\Lambda_p(P^k\x^i)$ and $\deg Q_1 = \max_i\left(\deg(\Lambda_p(P^k\x^i))\right)$, which depends only on $k, m_0,$ and $P$. For $i\geq 0$, letting $\vert (j_1,\ldots,j_r)\vert = \max_k j_k$, and recalling that $a_{-j}^k = \coef{j}{P^k}$, $$\Lambda_p(P^k\x^i) = \Lambda_p\left(\sum_{\vert j\vert<k\deg P}a^k_{-j}\x^{i+j}\right) = \sum_{\vert j\vert\leq k\deg P, p\mid i+j}a^k_{-j}\x^{\frac{i+j}{p}}.$$ From this, since $\vert j\vert \leq k\deg P\Rightarrow \vert\frac{i+j}{p}\vert\leq \frac{k\deg P + \vert i\vert}{p}\leq \frac{(p-1)\deg P + \vert i\vert}{p}$, we can see that $$\deg Q_1\leq \frac{p-1}{p}\deg P + \frac{\deg Q_0}{p},$$ independently of $k$. And in general, a simple induction shows that every $Q_n$ reached from $Q_0$ in $n$ steps has $$\deg Q_n\leq \deg P + \frac{\deg Q_0 - \deg P}{p^n}.$$ From this, we can immediately see that if $\deg Q_0 < \deg P$ then $\deg Q_n < \deg P$ as well for all $n$. Furthermore, the degrees of our $Q_i$s are decreasing when $\deg Q_0 > \deg P$ since this implies that $\deg Q_1\leq \deg P + \frac{\deg Q_0 - \deg P}{p} = \frac{(p-1)\deg P + \deg Q_0}{p} < \deg Q_0$; in particular, if $\log_p(\deg Q_0 - \deg P) < n$ then $\deg Q_n\leq \deg P$ for every $Q_n$ reachable from $Q_0$ in $n$ steps. Lastly, if $\deg Q_0 = \deg P$, then $\deg Q_1 = \deg\Lambda_p(P^{k}Q_0) < \deg P$ for all $k < p-1$, while $\deg\Lambda_p(P^{p-1}Q_0) = \deg P$.
\end{proof}

For a different analysis yielding Lemma \ref{deg_bound}, see Theorem 2.4 of \cite{statebound}.\\

We use $m$ as defined in Lemma \ref{deg_bound} for the remainder of the paper when $P$ and $Q$ are understood from context.\\

In general, $p$-automatic sequences have (at least) three useful descriptions. First, as a sequence determined by a DFAO as above. Second, as the result of coding a fixed point of a prolongable uniform morphism, whose equivalence to the first description is known as Cobham's Little Theorem \cite{CobhamLittleThm} (or see Theorem 5.4.1 of \cite{ShallitBook}). Third, every $p$-automatic sequence is always $p$-regular and hence has a $p$-linear representation $(v,\gamma,w)$, and the converse ($p$-regular $\Rightarrow$ $p$-automatic) is true for sequences taking on finitely many values. This equivalence is Theorem 9.6.1 of \cite{ShallitBook}, and the proof is constructive: using the $p$-linear representation of our sequence, a prolongable uniform morphism is built whose infinite fixed-point has a coding yielding the same sequence.\\

We make use of all three types of descriptions to prove our results, but the ``free'' constructions are not sufficient for our purposes. Hence, rather than using the Rowland-Zeilberger automaton for $a_n = \ct{P^nQ}\bmod p$ directly, we instead derive a $p$-linear representation for $a_n$ by using Lemma \ref{RZ_cong} directly. This yields a generalization of the Lucas congruences for central binomial and trinomial coefficients, which were the primary tools used in \cite{KohenUR} to derive similar uniform recurrence results when $P$ is of degree at most $1$. We then use this $p$-linear representation to build our other descriptions of $a_n$.

\section{A Novel Linear Representation}
Our $p$-linear representation is the result of interpreting the Rowland-Zeilberger machine within the vector space of Laurent polynomials over $\F_p$ of degree bounded by $m$. Note that this construction differs from the standard construction creating a $p$-linear representation from a DFAO, which uses the states of the DFAO as a basis rather than, for example, the standard basis of powers of $\x$.

\begin{definition}
Recall the definition of $m$ from Lemma \ref{deg_bound}. We wish to encode Laurent polynomials with coefficients in $\F_p$ of degree $\leq m$ as a product of copies of $\F_p$, and so we must choose a computational basis: Impose an order on $T = [-m,m]^r$ (e.g. lexicographical), where $T$ indexes the set of coefficients in a polynomial with degree $\leq m$. If $Q = \sum_{i\in T}q_i\x^i$ then let $\text{vec}(Q)$ be the row vector $(q_j)_{j\in T}$ and let $\code_Q:(\F_p^T)^*\rightarrow\F_p^*$ be the coding defined by $\code_Q(\vec{u}) = \text{vec}(Q)\cdot\vec{u}$.
\end{definition}

\begin{prop}\label{bookkeeping}
For every $k\in\F_p$ and $i\in T$, $\Lambda_p(P^k\x^i) = \sum_{j\in T}a_{i-pj}^k\x^j$.
\end{prop}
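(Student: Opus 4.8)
The statement is a pure bookkeeping identity, as its label suggests, so the plan is to compute $\Lambda_p(P^k\x^i)$ directly by expanding, multiplying, applying $\Lambda_p$ termwise, and then reindexing the surviving terms. The only substantive point is verifying that the resulting summation index may be restricted to $T$, which I will defer to the degree bound of Lemma \ref{deg_bound}.

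First I would expand $P^k$ in the monomial basis. By the notation convention $\coef{\ell}{P^k} = a^k_{-\ell}$, so $P^k = \sum_\ell a^k_{-\ell}\x^\ell$, where $\ell$ ranges over the exponents appearing in $P^k$ (those with $\vert\ell\vert\leq k\deg P$). Multiplying through by $\x^i$ gives $P^k\x^i = \sum_\ell a^k_{-\ell}\x^{\ell+i}$. Now I would apply $\Lambda_p$, using that it is $\F_p$-linear: by definition it discards every monomial $\x^{\ell+i}$ for which $p\nmid\ell+i$ and sends each surviving monomial to $\x^{(\ell+i)/p}$. Introducing the new index $j = (\ell+i)/p$, so that $\ell = pj-i$ and hence $-\ell = i-pj$, the coefficient $a^k_{-\ell}$ becomes $a^k_{i-pj}$. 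This yields $\Lambda_p(P^k\x^i) = \sum_j a^k_{i-pj}\x^j$, where a priori $j$ ranges over all of $\Z^r$ for which $pj-i$ is a valid exponent of $P^k$ (equivalently, over all $j\in\Z^r$, since $a^k_{i-pj}$ vanishes automatically when $pj-i$ exceeds the support of $P^k$).

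The remaining—and only delicate—step is to justify restricting the index set to $j\in T$, which is what makes the right-hand side a genuine element of $\F_p^T$. Here I would invoke Lemma \ref{deg_bound}: since $i\in T$ means $\x^i$ has degree $\vert i\vert\leq m$, the degree computation in that lemma shows $\deg\Lambda_p(P^k\x^i)\leq m$ for every $k\in\F_p$ (the two regimes $\deg P\leq m$ and $\deg P = m+1$ being checked exactly as in that proof, the latter using integrality of degrees). Consequently $a^k_{i-pj} = 0$ whenever $j\notin T$, so the sum over $\Z^r$ and the sum over $T$ coincide, establishing the claimed identity. I expect no genuine obstacle beyond confirming this degree bound, which is already available from the preceding lemma.
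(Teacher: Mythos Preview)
Your proposal is correct and follows essentially the same route as the paper's proof: both compute the coefficient of $\x^j$ in $\Lambda_p(P^k\x^i)$ by unwinding the definition of $\Lambda_p$ and tracking the exponent shift from multiplication by $\x^i$. The paper does this in a single line by reading off $\coef{j}{\Lambda_p(P^k\x^i)} = \coef{pj}{P^k\x^i} = \coef{pj-i}{P^k} = a^k_{i-pj}$, while you arrive at the same identity via expansion and reindexing; your added remark about restricting the index to $T$ via Lemma~\ref{deg_bound} is a point the paper leaves implicit.
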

\begin{proof}
Using the definition of $\Lambda_p$, $$\coef{j}{\Lambda_p(P^k\x^i)} = \coef{pj}{P^k\x^i} = \coef{pj - i}{P^k} = a_{i-pj}^k.$$
\end{proof}

\begin{definition}\label{linrepdef}
Notice that the map $Q\mapsto \Lambda_p(P^kQ)$ is $\F_p$-linear and maps $\F_p^T$ (the space of bounded-degree Laurent polynomials) to itself by Lemma \ref{deg_bound}. Thus, this map corresponds to a matrix with entries in $\F_p$, which we denote $\gamma(k)$. This map is determined by its action on the standard basis of monomials (written as row vectors to the left of $\gamma(k)$). Therefore, we can deduce from Proposition \ref{bookkeeping} that $$\gamma(k) := (a^k_{i - pj})_{i,j\in T}.$$ The matrix-valued string morphism, $\gamma$, defined in this way along with the row vector \text{vec}($Q$) and the column vector, $V(0)$, whose only non-zero entry is at the index corresponding to the constant term constitute a $p$-linear representation for the sequence $a_n = \ct{P^nQ}\bmod p$. This is because the computation vec$(Q)\cdot \gamma(n_p)\cdot V(0)$ when multiplied from left to right is exactly the computation executed by the Rowland-Zeilberger machine yielding $a_n$.
\end{definition}

Put another way: in Shallit's \emph{The Logical Approach to Automatic Sequences} \cite{ShallitBook}, section 9.3, he describes how to convert a $p$-linear representation of a $p$-automatic sequence into a set of relations of its finite $p$-kernel, and then Theorem 5.5.1 (which shows constructively that a sequence has finite $p$-kernel if and only if it is $p$-automatic) defines a process for converting this finite set of relations into a DFAO describing the sequence. Successively applying these two algorithms to the $p$-linear representation $(\text{vec}(Q), \gamma, V(0))$ exactly yields the Rowland-Zeilberger DFAO described in this section.\\

The matrices, $\gamma(k)$, of Definition \ref{linrepdef} can also be interpreted as acting on the left of column vectors that correspond to linear functionals. That is, we begin with $V(0)$, which corresponds to the linear functional $Q\mapsto \ct{Q}$, and then read the most significant base-$p$ digit, $k$, of $n$ in order to update to the new linear functional $Q\mapsto \ct{P^k Q}$. This process is then repeated until the final linear functional is $Q\mapsto \ct{P^nQ}$, as desired. This observation motivates the following definition and lemma.

\begin{definition}\label{Vdef}
Let $V(n)$ be the column vector $(a^n_i)_{i\in T}\in\F_p^T$, which stores the coefficients of $P^n$ whose terms have degree at most $m$.
\end{definition}

\begin{lemma}\label{Vlemma} For all non-negative integers $n$ and all $k\in\F_p$,
$$\gamma(k)\cdot V(n) = V(pn+k).$$
\end{lemma}
\begin{proof}
In view of Lemma \ref{RZ_cong} and Proposition \ref{bookkeeping}, for all $i\in T$,
\begin{align*}
V(pn+k)[i] &= a^{pn+k}_i\\
&= \ct{P^{pn+k}\x^i}\\
&\equiv \ct{P^n\cdot\Lambda_p(P^k\x^i)}\pmod p\\
&= \ct{P^n\cdot\sum_{j\in T}a^k_{i-pj}\x^j}\\
&= \sum_{j\in T}a^k_{i-pj}\cdot \ct{P^n\x^j}\\
&= \sum_{j\in T}a^k_{i-pj}a^n_j\\
&= \left((a^k_{i-pj})_{i,j\in T}\cdot V(n)\right)[i]\\
&= \left(\gamma(k)\cdot V(n)\right)[i].
\end{align*}
The result follows.
\end{proof}

Remarkably, this lemma shows that modulo $p$, the low-degree coefficients of $P^n$ can be efficiently computed without reference to higher degree coefficients (except for those of $P^k$ with $k<p$). It also details the process executed by our new reverse Rowland-Zeilberger machine whose states are the $V(i)$, which have output $\code_Q(V(i))$. This machine is essentially an instantiation of the reversal of a generic DFAO given in Theorem 4.3.3 of \cite{Allouche_Shallit_Book}, except that we keep the algebraic information inherited from the fact that we are computing constant term sequences. This description also shows that directly computing the reverse Rowland-Zeilberger machine is not, in principle, more expensive than computing the usual machine. Finally, the primary benefit of this description of $\ct{P^nQ}\bmod p$ is that it is independent of $Q$ until the output is computed from the final state. That is, we can study the sequence $(V(n))_{n\in\N}$ whose elements are in $\F_p^T$ and think of all of our constant term sequences as the images of this one sequence under various codings.\\

To this end, we introduce a description of the sequence $(V(n))_{n\in\N}$ as a fixed point of a prolongable uniform morphism.

\begin{definition}\label{morph}
When a Laurent polynomial $P$, a prime $p$, and an integer $m\geq \deg(P) - 1$ (and $m\geq \deg Q$ for every $Q$ that we may be considering) are fixed, we define the $p$-uniform morphism $\sigma:\left(\F_p^T\right)^*\rightarrow\left(\F_p^T\right)^*$ by $$\sigma(\vec{u}) := (\gamma(0)\cdot\vec{u})(\gamma(1)\cdot\vec{u})\cdots(\gamma(p-1)\cdot\vec{u}).$$
$\sigma$ is prolongable on $V(0)$ since $\gamma(0)\cdot V(0) = V(0)$ by Lemma \ref{Vlemma}, and so $$\alpha:= \sigma^\omega(V(0)) = V(0)V(1)V(2)\cdots\in\left(\F_p^T\right)^*$$ whose $n$th character is $V(n)$, is a fixed point of $\sigma$, i.e., $\sigma(\alpha) = \alpha$.\\

To see that $\alpha[n] = V(n)$, note that $$\sigma(V(j)) = (\gamma(0)\cdot V(j))\cdots(\gamma(p-1)\cdot V(j)) = V(pj)\cdots V(pj+p-1)$$ and thus $\sigma(V(0)V(1)\cdots V(p^n-1))$ is equal to $$(V(0)\cdots V(p-1))(V(p)\cdots V(p^2-1))\cdots (V(p^{n+1}-p)\cdots V(p^{n+1}-1)).$$
\end{definition}

In essence, we can fluidly and interchangeably utilize all four (compatible) representations of $\ct{P^nQ}$; as the Rowland-Zeilberger DFAO (Lemma \ref{RZ_cong}), as the reverse Rowland-Zeilberger DFAO (Lemma \ref{Vlemma}), using the $p$-linear representation of Definition \ref{linrepdef}, and using a coding of the fixed point of Definition \ref{morph}. The dictionary allowing us to go between these representations is most easily stated by relating each description to the $p$-linear representation as follows: The states of the Rowland-Zeilberger DFAO are labeled by polynomials $Q$, corresponding to $\text{vec}(Q)$ in the $p$-linear representation; transitions from $Q$ to $Q'$ labeled by $k$ in the DFAO are such that $\text{vec}(Q)\cdot\gamma(k) = Q'$. Likewise, the states of the reverse Rowland-Zeilberger DFAO are labeled by the $V(i)$ with transitions from $V(i)$ to $V(j)$ labeled by $k$ such that $\gamma(k)\cdot V(i) = V(j)$. Lastly, the relation $\gamma(k)\cdot V(n) = V(pn+k)$ describes the progression of the fixed-point, $\alpha$, which can be useful for reasoning about properties such as uniform recurrence.\\

We end this section with a fundamental lemma about $\gamma(0)$:

\begin{lemma}\label{zero_out}
Let $C$ be the index associated with the polynomial $1$ in $T$'s order. There is an integer $s\in\N$ such that for every $s'\geq s$, $\gamma(0)^{s'} = \gamma(0^{s'}) = e_{C,C}$, that is to say it has a $1$ in the row and column corresponding to the constant term and $0$s everywhere else.
\end{lemma}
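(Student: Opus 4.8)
The plan is to understand $\gamma(0)$ as the linear map $Q\mapsto \Lambda_p(P^0 Q) = \Lambda_p(Q)$ acting on the space $\F_p^T$ of Laurent polynomials of degree $\leq m$. Concretely, by Proposition \ref{bookkeeping} with $k=0$, the entries are $\gamma(0) = (a^0_{i-pj})_{i,j\in T}$, and since $P^0 = 1$ we have $a^0_{i-pj} = \coef{pj-i}{1}$, which is $1$ when $i = pj$ and $0$ otherwise. So $\gamma(0)$ simply implements $\x^{pj}\mapsto \x^j$ and annihilates every monomial whose exponent vector is not divisible by $p$. The key observation is that iterating this map contracts degrees: if $\x^i$ survives $s'$ applications of $\gamma(0)$, then $i$ must be divisible by $p^{s'}$, and the image is $\x^{i/p^{s'}}$.

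First I would record the precise action of $\gamma(0)$ on basis monomials as above, noting that $\gamma(0)\cdot e_i$ (the standard basis vector for $\x^i$) equals $e_{i/p}$ if $p\mid i$ and $0$ otherwise. Next I would compute $\gamma(0)^{s'}$ by composing: the only monomials $\x^i$ with $i\in T$ that are not killed after $s'$ steps are those with $p^{s'}\mid i$, and each such monomial maps to $\x^{i/p^{s'}}$. The crucial point is that $T = [-m,m]^r$ is finite, so once $p^{s'} > m$ the only index $i\in T$ divisible by $p^{s'}$ is $i = 0$ (the constant term), since any nonzero entry of $i$ has absolute value between $1$ and $m < p^{s'}$ and therefore cannot be a nonzero multiple of $p^{s'}$. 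At that point $\gamma(0)^{s'}$ sends $\x^0 = 1$ to itself and kills every other basis monomial, which is exactly the statement that $\gamma(0)^{s'} = e_{C,C}$.

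Concretely, I would take $s = \lceil \log_p(m+1)\rceil$, or simply the least $s$ with $p^s > m$, and verify that for every $s'\geq s$ the surviving index set is $\{0\}$; this also shows the sequence of matrices $\gamma(0)^{s'}$ stabilizes at $e_{C,C}$, so idempotence ($e_{C,C}^2 = e_{C,C}$) is consistent with $\gamma(0)^{s'} = \gamma(0)^{s'+1}$ for $s'\geq s$. Finally, the identity $\gamma(0)^{s'} = \gamma(0^{s'})$ is immediate from the definition of the morphism $\gamma$, which converts string concatenation into matrix multiplication, so $\gamma(0^{s'}) = \gamma(0)\cdots\gamma(0)$ ($s'$ factors).

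I do not expect a serious obstacle here; the statement is essentially a finiteness-plus-divisibility argument. The only point requiring slight care is making the degree-contraction bookkeeping rigorous across $r$ variables simultaneously, ensuring that divisibility by $p^{s'}$ of the full exponent vector $i$ (every coordinate divisible) combined with the box constraint $i\in[-m,m]^r$ forces $i = 0$ coordinatewise once $p^{s'} > m$. This is straightforward but should be stated explicitly rather than waved through.
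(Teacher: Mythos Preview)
Your argument is correct and follows essentially the same route as the paper's proof: both identify $\gamma(0)$ with the operator $Q\mapsto\Lambda_p(Q)$, iterate it to contract degrees until only the constant term survives, and take $s$ to be the least integer with $p^s>m$. One minor notational slip: in the paper's convention polynomials are \emph{row} vectors, so your basis computation should read $e_i^{tr}\cdot\gamma(0)=e_{i/p}^{tr}$ (when $p\mid i$) rather than $\gamma(0)\cdot e_i$, since the column-vector action of $\gamma(0)$ sends $e_j\mapsto e_{pj}$ (cf.\ Lemma~\ref{Vlemma}).
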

\begin{proof}
The matrix $e_{C,C}$ is characterized by the fact that for every vector $\vec{u}$, $\vec{u}^{tr}\cdot e_{C,C} = (\vec{u}[C])e_C^{tr}$ since, in particular, this being true for the basis vectors $e_i^{tr}$ implies that all rows other than the $C$ row are zero and that the $C$ row is $e_C^{tr}$. Every row vector can be written as $\text{vec}(Q)$ for some $Q$, and $\text{vec}(Q)\cdot\gamma(0) = \text{vec}(\Lambda_p(P^0Q)) = \text{vec}(\Lambda_p(Q))$. Furthermore, $\Lambda_p(Q)$ has degree at most $\frac{0\cdot\deg P + \deg Q}{p} = \frac{\deg Q}{p}$ as seen in Lemma \ref{deg_bound}. So letting $s'\geq s = \lfloor\log_p(\deg Q)\rfloor + 1$ yields $\text{vec}(Q)\cdot\gamma(0)^{s'} = \text{vec}((\Lambda_p^0)^{s'}(Q)) = \text{vec}(\ct{Q}) = (\text{vec}(Q)[C])e_C^{tr}$, as desired.
\end{proof}

This lemma can be thought of as partially explaining some of the connection between the sequences $\ct{P^nQ}$ and $\ct{P^n\cdot 1}$ since, beginning with $Q$, we can always reach a state corresponding to $Q'\in\F_p\subset\F_p[x_1,\ldots, x_r,x_1^{-1},\ldots,x_r^{-1}]$ in some fixed number of steps using $\gamma(0^s)$.

\section{Classifying Uniformly Recurrent Constant Term Sequences mod $p$}
We begin by analyzing the case in which $\ct{P^nQ}\bmod p$ is not uniformly recurrent, in which case the frequency of $0$ is $1$. First, we note that checking whether a $0$ appears in $\ct{P^n}\bmod p$ can be done by only inspecting a prefix of this sequence.

\begin{prop}\label{check_bound}
Let $P$ be any Laurent polynomial and $p$ be prime. There is a $B_{P,p}\in\N$, depending on $p$ and $\deg(P)$, so that if there exists some $n\in\N$ such that $p\mid \ct{P^n}$, then there exists an $n_0\in\N$ with $n_0<B_{P,p}$ such that $p\mid \ct{P^{n_0}}$.
\end{prop}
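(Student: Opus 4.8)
The plan is to exploit the fact, established in Section 3, that $\ct{P^n}\bmod p$ takes values in a finite set and is recovered as a coding of the fixed point $\alpha = V(0)V(1)V(2)\cdots$ of the $p$-uniform morphism $\sigma$ of Definition \ref{morph}. Specializing to $Q = 1$, so that $m = \max(\deg P - 1, 0)$ and $\text{vec}(1) = e_C^{tr}$, we have $\ct{P^n} = a^n_0 = V(n)[C]$ by Definition \ref{Vdef}. Thus a zero appears in $\ct{P^n}\bmod p$ if and only if some letter $V(n)$ of $\alpha$ has a $0$ in position $C$, and the problem becomes one of bounding where such a letter can first appear in $\alpha$.

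First I would record that the alphabet in play is genuinely finite and of controlled size: every letter of $\alpha$ lies in $\F_p^T$ with $T = [-m,m]^r$, so there are at most $p^{|T|}$ distinct letters, where $|T| = (2m+1)^r \leq (2\deg P - 1)^r$ depends only on $p$, $\deg P$, and the (fixed) number of variables $r$. Let $\Sigma'\subseteq\F_p^T$ denote the set of letters that actually occur in $\alpha$.

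Next I would invoke the standard stabilization argument for fixed points of prolongable uniform morphisms. Writing $L_n$ for the set of letters occurring in the prefix $\sigma^n(V(0))$ (which has length $p^n$ and is itself a prefix of $\sigma^{n+1}(V(0))$, since $\sigma$ is prolongable on $V(0)$), the sets $L_n$ are nondecreasing in $n$; moreover, once $L_n = L_{n+1}$ the chain is constant thereafter, because $L_{n+2}$ is determined by $L_{n+1} = L_n$ under $\sigma$. Hence the chain can strictly grow at most $|\Sigma'| - 1$ times, so $L_{|\Sigma'| - 1} = \Sigma'$. Consequently every letter appearing anywhere in $\alpha$ already appears within the prefix $\sigma^{|\Sigma'|-1}(V(0))$, whose length is $p^{|\Sigma'|-1}$.

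Combining these, if $p\mid\ct{P^n}$ for some $n$, then a letter $V(n)$ with $V(n)[C] = 0$ occurs in $\alpha$, hence occurs at some index $n_0 < p^{|\Sigma'|-1}$, giving $p\mid\ct{P^{n_0}}$. Taking $B_{P,p} = p^{p^{(2\deg P - 1)^r}} \geq p^{|\Sigma'|-1}$, a quantity depending only on $p$ and $\deg P$ (and the ambient $r$), yields $n_0 < B_{P,p}$ and completes the argument. I expect the only delicate point to be the bookkeeping in the stabilization step, namely verifying that the chain $L_n$ freezes once it repeats and that the number of strict increases is controlled purely by $|\Sigma'|$; everything else follows directly from the identities already proved. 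An essentially equivalent route phrases this as reachability in the reverse Rowland-Zeilberger automaton, where every reachable state is reached along a path of length at most $|\Sigma'| - 1$ and a path of length $\ell$ from $V(0)$ reads a base-$p$ index below $p^\ell$; I would mention this as an alternative but carry out the fixed-point version.
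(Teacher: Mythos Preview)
Your argument is correct and lands on the same bound $B_{P,p}=p^{p^{(2\deg P-1)^r}}$ as the paper, via essentially the same finite-state reachability idea: the paper argues directly that the (forward) Rowland--Zeilberger automaton for $Q=1$ has at most $p^{|T|}$ states, so any state with output $0$ is reached along a path of length below that, while you run the dual argument through the alphabet of $\alpha$ and the stabilization of the letter-sets $L_n$. Since you already note the equivalence with reachability in the reverse automaton, the two proofs are interchangeable formulations of the same observation.
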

\begin{proof}
The Rowland-Zeilberger automaton for $\ct{P^n}\bmod p$ contains at most $p^{\vert T\vert} = p^{(2m+1)^r} = p^{(2\cdot\deg(P)-1)^r}$ states. If one of the states has output $0$, then there must be a path to it from the starting state of length less than the number of states. Thus, if we let $B_{P,p} = p^{p^{(2\cdot\deg(P)-1)^r}}$, then there must be a $0$ in our sequence for some $n_0<B_{P,p}$.
\end{proof}

However, this bound of $p^{p^{(2\cdot\deg(P)-1)^r}}$ seems far too large. In the author's brief computer experimentation in one variable, no sequence or prime was found violating the following bound:

\begin{conj}
Proposition \ref{check_bound} holds for $B_{P,p} = p^{\deg(P)}$ when $P$ has one variable ($r=1$).
\end{conj}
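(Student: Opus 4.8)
The plan is to first recast the statement purely in terms of the forward Rowland--Zeilberger machine for $Q = 1$. Reading the base-$p$ digits of $n$ from least to most significant, the reachable states are the polynomials $Q_0 = 1$, $Q_{j+1} = \Lambda_p(P^{n_p[j]}Q_j)$, and since reading a leading $0$ preserves the constant term (because $\coef{0}{\Lambda_p(Q)} = \coef{0}{Q}$) one has $\ct{P^n} = \ct{Q_j}$ whenever $n < p^{j}$. Thus the conjecture is equivalent to the assertion that, in this machine, \emph{if any reachable state has vanishing constant term then one such state is reachable in at most $\deg P$ steps}. This is a dramatic sharpening of the reachability bound underlying Proposition \ref{check_bound}, where the depth to a zero-output state is allowed to be as large as the total number of states $p^{2\deg P - 1}$.

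Second, I would try to explain the exponent $\deg P$ through a generalized Lucas congruence. For $\deg P = 1$ (the trinomial case) Proposition~1 of \cite{KohenUR} gives a clean product formula over the individual digits, so a zero is forced by a single digit and the bound $p^1$ is immediate. For general $\deg P$, the Freshman's-dream factorization $P^n \equiv \prod_i P(x_1^{p^i},\ldots,x_r^{p^i})^{n_p[i]}\pmod p$ produces carries between coefficients whose exponents differ by up to $\deg P$ positions, and the $p$-linear representation of Definition \ref{linrepdef} packages exactly this bookkeeping. The goal would be to reorganize $\ct{P^n} = e_C^{tr}\gamma(n_p[0])\cdots\gamma(n_p[\ell-1])V(0)$ into a transfer product over overlapping windows of $\deg P$ consecutive digits, and then to prove that the resulting window-transfer system forces a zero inside its first window whenever it forces one at all. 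Concretely this should reduce to a rank or triangularity property of the windowed transfer matrices, which is what I would look for first; Lemma \ref{zero_out}, which shows that $\gamma(0)$ collapses to the rank-one matrix $e_{C,C}$, is exactly the kind of structural degeneration that makes such a property plausible.

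An alternative, more analytic route is to exploit algebraicity: $f(t) = \sum_n \ct{P^n}t^n$ is the diagonal of a bivariate rational function and hence algebraic over $\F_p(t)$, and by Furstenberg's residue formula its degree is governed by the $2\deg P$ roots of $1 - tP(x)$ in $x$. Via Christol's theorem the Cartier operator $\Lambda_p$ acts on a finite-dimensional module generated by $f$ and its Furstenberg conjugates, and the index of the first zero of the coefficient sequence can be bounded by the dimension of this module together with the length of the initial transient of $\Lambda_p$ (again controlled by Lemma \ref{deg_bound} and Lemma \ref{zero_out}). To land on the exponent $\deg P$ rather than $2\deg P$ one would need to halve the effective dimension, presumably by using the symmetry of the residue sum that pairs the roots of $1 - tP(x)$.

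I expect the crux --- and the reason the statement is only conjectured --- to be precisely the passage from ``a zero exists somewhere in the orbit'' to ``a zero exists within the first $\deg P$ digits''. The exponential bound of Proposition \ref{check_bound} is robust but very lossy, and neither the Lucas-window heuristic nor the algebraic-degree count obviously rules out a first zero appearing deep in the automaton; closing this gap appears to demand a genuinely new invariant certifying that zeros of $\ct{P^n}\bmod p$ cannot be hidden below depth $\deg P$. Since the computer experiments in one variable support the bound, I would use small cases to test whether the windowed transfer matrices really have the conjectured rank structure before committing to the general argument.
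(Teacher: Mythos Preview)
The statement you are addressing is a \emph{conjecture} in the paper, not a theorem: the paper offers no proof, only the remark that brief computer experimentation in one variable found no counterexample to the bound $B_{P,p}=p^{\deg P}$. So there is no paper proof to compare your proposal against.

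Your proposal is consistent with this status. You do not actually give a proof; you outline two heuristic routes (a windowed Lucas-type transfer argument, and an algebraicity/Christol bound via Furstenberg's diagonal) and then correctly identify the essential obstruction---namely, that nothing in either route forces the \emph{first} zero to occur within depth $\deg P$ rather than merely somewhere in the reachable set. That diagnosis matches the paper's own position: the exponential bound of Proposition~\ref{check_bound} is the only thing established, and the polynomial exponent $\deg P$ is purely empirical. If you intended this as a proof attempt, the gap is exactly the one you name yourself: neither the rank/triangularity property of the windowed transfer matrices nor the halving of the Furstenberg module dimension is proved or even precisely formulated, and without one of them you have no mechanism that distinguishes $p^{\deg P}$ from the trivial $p^{p^{2\deg P-1}}$. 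If instead you intended this as a research plan, it is a reasonable one, and it is already more detailed than anything the paper provides.
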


Now, we show one direction of our main result: constant term sequences are not uniformly recurrent when the corresponding $Q=1$ constant term sequence has an entry divisible by $p$.

\begin{thm}\label{zero_density}
If $P$ is any Laurent polynomial and there exists some $n\in\N$ such that $\ct{P^n}\equiv 0\pmod p$, then the frequency of $0$ is $1$ in every sequence of the form $\ct{P^nQ}\bmod p$ for every Laurent polynomial $Q$. That is, the set of indices, $n$, such that $\ct{P^nQ}\equiv 0\pmod p$ has density $1$. In particular, $\ct{P^nQ}\bmod p$ contains arbitrarily large runs of $0$s and is not uniformly recurrent.
\end{thm}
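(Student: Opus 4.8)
The plan is to produce a single finite word $W$ over $\F_p$ with $\gamma(W)$ equal to the zero matrix, and then to observe that $\ct{P^nQ}\bmod p$ vanishes for \emph{every} $Q$ whenever $W$ occurs as a contiguous block of the base-$p$ digits of $n$; since such $n$ have density $1$, the frequency of $0$ is $1$. Throughout I use the $p$-linear representation of Definition \ref{linrepdef}, writing $\ct{P^nQ}\equiv\text{vec}(Q)\cdot V(n)\pmod p$ where, by iterating Lemma \ref{Vlemma} from $V(0)=e_C$ (the functional $Q\mapsto\ct{Q}$, with $C$ the index of the constant term), one has $V(n)=\gamma(n_p)\cdot V(0)$.

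First I would build the killer word. Let $n_0$ be an index with $\ct{P^{n_0}}\equiv 0\pmod p$; since $\ct{P^0}=1$ we have $n_0\geq 1$, so $(n_0)_p$ is a nonempty string. Because $V(n_0)=\gamma((n_0)_p)\cdot e_C$ is the $C$-th column of $\gamma((n_0)_p)$, its $C$-th entry is exactly the $(C,C)$ entry of $\gamma((n_0)_p)$, and this entry equals $V(n_0)[C]=a^{n_0}_0=\ct{P^{n_0}}\equiv 0$. Now take $s$ as in Lemma \ref{zero_out}, so $\gamma(0^s)=e_{C,C}$, and set $W:=0^s(n_0)_p0^s$. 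Using the morphism property together with the identity $e_{C,C}\,A\,e_{C,C}=A_{C,C}\,e_{C,C}$ valid for any matrix $A$, I compute
\begin{equation*}
\gamma(W)=\gamma(0^s)\,\gamma((n_0)_p)\,\gamma(0^s)=e_{C,C}\,\gamma((n_0)_p)\,e_{C,C}=\gamma((n_0)_p)_{C,C}\,e_{C,C}=0 .
\end{equation*}
This is the crux of the argument: a single zero of the $Q=1$ sequence, transported through the constant-term projector $e_{C,C}$, annihilates an entire transition block.

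Next I would cash this in. If $W$ occurs as a contiguous substring of the (possibly high-order-zero-padded) base-$p$ representation of $n$, then factoring that representation as $uWv$ gives $\gamma(n_p)=\gamma(u)\,\gamma(W)\,\gamma(v)=0$, whence $V(n)=\gamma(n_p)\cdot V(0)=0$ and therefore $\ct{P^nQ}\equiv\text{vec}(Q)\cdot V(n)=0$ for every $Q$ simultaneously; the padding is harmless since $\gamma(0)\cdot V(0)=V(0)$. It then remains to show that the set of $n$ whose base-$p$ digits contain the fixed nonempty word $W$ has density $1$. This is the standard ``a fixed pattern appears almost surely'' estimate: among the $p^N$ strings of length $N$, splitting into $\lfloor N/|W|\rfloor$ disjoint blocks of length $|W|$ shows that at most $(1-p^{-|W|})^{\lfloor N/|W|\rfloor}p^{N}$ of them avoid $W$, a vanishing fraction as $N\to\infty$. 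Hence the zero set of $\ct{P^nQ}\bmod p$ has density $1$.

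Finally, density $1$ of zeros forces arbitrarily long runs of $0$ (otherwise the nonzero terms would have positive lower density), and a sequence possessing arbitrarily long runs of $0$ together with at least one nonzero term cannot be uniformly recurrent: if the single-letter word $c\neq 0$ recurred with gap bound $C_c$, then past the first occurrence of $c$ every window of length $C_c+1$ would contain a $c$, contradicting the existence of an all-zero run of that length. I expect the only genuinely delicate point to be the construction and annihilation of $W$ in the second paragraph, namely verifying that the relevant matrix entry is precisely $\ct{P^{n_0}}$ and invoking Lemma \ref{zero_out} correctly, whereas the density count and the recurrence argument are routine.
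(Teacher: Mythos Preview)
Your proof is correct and follows essentially the same route as the paper: you construct the same killer word $W=0^s(n_0)_p0^s$, verify $\gamma(W)=0$ via Lemma~\ref{zero_out} and the fact that the $(C,C)$ entry of $\gamma((n_0)_p)$ is $\ct{P^{n_0}}$, and then use the same block-counting density estimate. Your write-up is in fact a bit more explicit than the paper's in justifying $\gamma(W)=0$ and in spelling out why long zero runs preclude uniform recurrence (your caveat ``together with at least one nonzero term'' is well taken).
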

\begin{proof}
If $p\mid \ct{P^{n_0}}$, then there exists $s\in\N$ by Lemma \ref{zero_out} such that $\gamma(0)^s\gamma(n_0)\gamma(0)^s = 0$. Because $\ct{P^nQ} \equiv \text{vec}(Q)\cdot V(n) = \text{vec}(Q)\cdot\gamma(n)\cdot V(0)$, all $n$ such that $n_p$ contains $0^s(n_0)_p0^s$ must have $\ct{P^nQ} \equiv 0$. If $\beta = \vert (n_0)_p\vert$, then consider each $n\in \left[0, p^{(\beta+2s)\cdot k}\right)$ as corresponding to an element of $\left(\F_p^{\beta+2s}\right)^k$ via its base-$p$ representation chunked into blocks of size $\beta+2s$. If any of the $k$ entries in $\F_p^{\beta+2s}$ for $n$ correspond to $0^s(n_0)_p0^s$, then we know that $V(n)=0$ and $\ct{P^nQ}\equiv 0$, so at least $p^{(\beta+2s)\cdot k} - \left(p^{\beta+2s} - 1\right)^k$ of such $n$ have $\ct{P^nQ} \equiv 0$. Therefore, the frequency of $0$ in $\ct{P^nQ}\bmod p$ is at least $\frac{p^{(\beta+2s)\cdot k} - \left(p^{\beta+2s} - 1\right)^k}{p^{(\beta+2s)\cdot k}} = 1 - \left(\frac{p^{\beta+2s} - 1}{p^{\beta+2s}}\right)^k\rightarrow 1$ as $k\rightarrow\infty$. In fact, this shows that the frequency of $0$ in $\alpha$ (of Definition \ref{morph}) is $1$.
\end{proof}

We wish to prove that if $0$ does not appear in $\ct{P^n}\bmod p$, then $\ct{P^nQ}\bmod p$ is uniformly recurrent. Our main result follows from the following proposition about the morphism $\sigma$ of Definition \ref{morph}:

\begin{prop}\label{v0_appears}
There exists $t_0\in\N$ such that for all $n\in\N$, if $p\nmid \ct{P^n}$ then $\sigma^{t_0}(V(n)) = x_0V(0)y_0$ with $x_0,y_0\in\left(\F_p^T\right)^*$. That is, $V(0)$ is a character in $\sigma^{t_0}(V(n))$ for every such $n$.
\end{prop}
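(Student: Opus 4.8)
The plan is to read off from Definition \ref{morph} that
\[
\sigma^{t_0}(V(n)) = V(p^{t_0}n)\,V(p^{t_0}n+1)\cdots V(p^{t_0}n+p^{t_0}-1),
\]
so that the assertion ``$V(0)$ is a character of $\sigma^{t_0}(V(n))$'' is equivalent to the existence of an $r$ with $0\le r<p^{t_0}$ and $V(p^{t_0}n+r)=V(0)$. Writing $m=p^{t_0}n+r=\sum_i r_ip^i+p^{t_0}n$ in base $p$, its lowest $t_0$ digits are those of $r$ and its higher digits are those of $n$, so iterating Lemma \ref{Vlemma} gives $V(p^{t_0}n+r)=\gamma(r_0)\gamma(r_1)\cdots\gamma(r_{t_0-1})\cdot V(n)$. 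Thus the goal reduces to producing, for every $n$ with $p\nmid\ct{P^n}$, a \emph{bounded-length} string of digits whose $\gamma$-action carries $V(n)$ to $V(0)$.

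The engine for this is Lemma \ref{zero_out}: for $s'\ge s$ we have $\gamma(0)^{s'}=e_{C,C}$, and since the $C$-entry of $V(n)$ is $a^n_C=\ct{P^n}$ and $e_C=V(0)$, this yields the key collapse identity $\gamma(0)^{s'}V(n)=\ct{P^n}\,V(0)$ for all $s'\ge s$. In words, prepending at least $s$ zero-digits forces $V(n)$ onto the line spanned by $V(0)$, at the nonzero scalar $c:=\ct{P^n}\in\F_p^*$. What remains is purely scalar: given $c\,V(0)$, I need a short digit string whose action multiplies by $c^{-1}$.

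To get such strings uniformly I would study the set $G=\{d\in\F_p^*:V(R)=d\,V(0)\text{ for some }R\in\N\}$ of achievable scalars. It contains $1$, and the collapse identity shows $\ct{P^N}\in G$ for all $N$. It is closed under multiplication: if $V(R_1)=d_1V(0)$ and $V(R_2)=d_2V(0)$, then appending the digits of $R_1$ to $R_2$ and using $\F_p$-linearity gives $V(p^{\vert(R_1)_p\vert}R_2+R_1)=d_1d_2\,V(0)$. A nonempty multiplicatively closed subset of the finite group $\F_p^*$ is a subgroup, so $G$ is closed under inverses and is finite. Hence I may fix once and for all a witness $R_d$ with $V(R_d)=d\,V(0)$ for each of the finitely many $d\in G$, and set $L=\max_{d\in G}\vert(R_d)_p\vert$ and $t_0=L+s$. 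Then for any $n$ with $c=\ct{P^n}\neq0$ we have $c\in G$ (take $R=p^sn$), hence $c^{-1}\in G$, and $r:=R_{c^{-1}}<p^{L}\le p^{t_0}$; padding $r$ to $t_0$ digits leaves at least $t_0-L\ge s$ leading zeros, so $\gamma(r_0)\cdots\gamma(r_{t_0-1})=A\cdot\gamma(0)^{t_0-\ell}$ with $\ell=\vert(R_{c^{-1}})_p\vert$ and $A\,V(0)=V(R_{c^{-1}})=c^{-1}V(0)$. Applying this to $V(n)$ gives $A\bigl(\gamma(0)^{t_0-\ell}V(n)\bigr)=A(c\,V(0))=V(0)$, so $V(p^{t_0}n+r)=V(0)$.

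The hard part is uniformity, i.e.\ producing a single $t_0$ valid for all $n$ simultaneously, since the window in $\sigma^{t_0}(V(n))$ has fixed width $p^{t_0}$. The naive route of repeating the digit block $(n)_p0^s$ exactly $p-1$ times, which does land on $V(0)$ because $c^{\,p-1}=1$, fails here: the number of blocks needed is fixed, but each block has length $\vert(n)_p\vert+s$, so the window width it requires grows with $n$. The resolution, and the crux of the argument, is to decouple the $n$-dependent collapse step (a uniform $s$ zeros) from the scalar-correction step, and to handle the latter through the finiteness of $\F_p^*$: only finitely many values $c$ can occur, and the subgroup structure of $G$ supplies bounded-length correction words $R_{c^{-1}}$, yielding an $n$-independent $t_0$.
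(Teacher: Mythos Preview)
Your proof is correct and follows the same overall strategy as the paper: first use Lemma~\ref{zero_out} to collapse $V(n)$ to $c\,V(0)$ with $c=\ct{P^n}\in\F_p^\times$, then produce a bounded-length digit string, independent of $n$, that corrects the scalar $c$ back to $1$.

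The only difference is in how the scalar correction is packaged. The paper fixes an $s'$ so that every value that ever occurs as $V(i)[C]$ already appears among $\{V(i)[C]:i<p^{s'}\}$; this guarantees that $\sigma^{s'+s}(V(0))$ contains $c\,V(0)$, and then by linearity one iterates this ``multiply by $c$'' step $p-2$ times, invoking Fermat's little theorem to land on $c^{\,p-1}V(0)=V(0)$, giving the explicit $t_0=(s'+s)(p-2)+s$. You instead observe that the set $G$ of achievable scalars is a nonempty multiplicatively closed subset of the finite group $\F_p^\times$, hence a subgroup, and then apply a single fixed witness for $c^{-1}$ directly. Your route avoids the $(p-2)$-fold iteration and can yield a smaller $t_0$; the paper's route makes the bound completely explicit without having to enumerate $G$ or choose witnesses. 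Either way the essential content---the collapse via $e_{C,C}$ and the uniform bound coming from the finiteness of $\F_p^\times$---is identical.
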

\begin{proof}
Let $s\in\N$ such that $\gamma(0)^s = e_{C,C}$. Next, fix $s'\in\N$ such that $C$-term entries (i.e., indexed by $C$) of $\sigma^{s'}(V(0))$ contain all $C$-term values of $V(i)$ that occur for any $i\in\N$. Recall that the $C$-term value of $V(i)$ is the constant coefficient of $P^i$. An $s'$ exists since there are finitely many (namely, $p$) values that $C$-term entries of $V(i)$ can take on, each initially occurring at some finite value of $i$. Let $V(n)[C]\in\F_p$ denote the entry of $V(n)$ corresponding to the constant term, which is not $0$ by our assumption that $p\nmid \ct{P^n}$. Finally, we have that $\gamma(0)^s\cdot V(n) = \left(V(n)[C]\right)V(0)$, $\sigma^{s' + s}(V(0))$ contains $\left(V(n)[C]\right)V(0)$, and $\left(V(n)[C]\right)^{p-1}\equiv 1\pmod p$ by Fermat's Little Theorem. Therefore, we can conclude that $\sigma^{(s'+s)(p-2)+s}(V(n))$ contains $V(0)$, as desired.
\end{proof}

Note that $t_0$ above does not depend on $n$. We now have everything we need to prove our main result and more.

\begin{thm}\label{main}
If $P$ and $Q$ are Laurent polynomials with integer coefficients and $p$ is prime, then the sequence $\ct{P^nQ}\bmod p$ is linearly recurrent if and only if $p\nmid \ct{P^n}$ for all $n\in\N$, and this can be checked for bounded $n<B_{P,p}$. When the sequence is not linearly recurrent, it is $0$ with frequency $1$. This classification is independent of $Q$.
\end{thm}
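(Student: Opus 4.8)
The plan is to prove the two directions separately, using the preceding propositions as the engines for each. The ``only if'' direction and the frequency-$1$ claim are already handled by Theorem \ref{zero_density}: if some $\ct{P^{n_0}}\equiv 0\pmod p$, then $\ct{P^nQ}\bmod p$ has $0$ with frequency $1$, hence contains arbitrarily long runs of $0$s and cannot be uniformly recurrent (let alone linearly recurrent). Combined with Proposition \ref{check_bound}, this also gives the ``checkable for bounded $n<B_{P,p}$'' assertion for free, since the existence of a divisible-by-$p$ constant term can be detected within a finite prefix.

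For the ``if'' direction I would assume $p\nmid\ct{P^n}$ for all $n$ and prove linear recurrence. The natural strategy is to exploit Proposition \ref{v0_appears}: under this hypothesis, there is a fixed $t_0$ so that $V(0)$ occurs as a character in $\sigma^{t_0}(V(n))$ for \emph{every} $n$. First I would record that $\alpha = \sigma^\omega(V(0))$ and that, since $\sigma$ is $p$-uniform, applying $\sigma^{t_0}$ to any single character $V(n)$ produces a block of length exactly $p^{t_0}$ that is a contiguous factor of $\alpha$ and contains $V(0)$. This means the symbol $V(0)$ recurs in $\alpha$ with bounded gap: starting from any position, one finds $V(0)$ within the next $p^{t_0}$ characters. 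The aim is to upgrade this ``one symbol recurs boundedly'' statement into linear recurrence of the whole coded sequence $\ct{P^nQ}\bmod p = \code_Q(\alpha)$.

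The key mechanism is a standard fact about fixed points of prolongable uniform morphisms whose ``seed'' letter recurs with bounded gaps, namely that such words are linearly recurrent (see Section 10.9 of \cite{Allouche_Shallit_Book} and Section 10.8.8 of \cite{ShallitBook}). Concretely, I would argue as follows. Given any factor $w$ of $\alpha$ of length $\ell$, choose $N$ with $p^N\geq \ell$; then $w$ is contained in the image $\sigma^N(V(i))\sigma^N(V(i+1))$ of two consecutive letters for some $i$, and hence its position in $\alpha$ is controlled by the position of the pair $V(i)V(i+1)$ in $\alpha$. Since $V(0)$ recurs with gap at most $p^{t_0}$, and $\sigma^N$ is $p^N$-uniform, every occurrence of $V(0)$ forces a copy of $\sigma^N(V(0)\cdots)$ within a window whose length is a bounded multiple of $p^N$; because $p^N$ can be taken within a constant factor of $\ell$, this yields a recurrence constant $C_w$ that is $O(\ell)$. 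Finally, applying the coding $\code_Q$ preserves factors and can only shrink recurrence gaps, so $\ct{P^nQ}\bmod p$ inherits linear recurrence from $\alpha$, and this argument never referenced $Q$ beyond the final coding, establishing the claimed independence of $Q$.

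The main obstacle I anticipate is making the jump from ``the single letter $V(0)$ recurs with bounded gaps'' to ``every factor recurs with gaps linear in its length'' fully rigorous, rather than merely invoking the cited general theory. The subtlety is that bounded recurrence of one letter does not by itself give linear recurrence of arbitrary factors; one needs that $V(0)$ recurs boundedly \emph{and} that the uniform morphism $\sigma$ ``expands'' neighborhoods of $V(0)$ in a controlled, length-proportional way so that a factor of length $\ell$ is pinned down by a bounded-gap occurrence of the seed at scale $p^N\approx\ell$. I would either cite the precise linear-recurrence criterion for fixed points of primitive (or suitably recurrent) uniform morphisms from \cite{ShallitBook}, or spell out the window-counting estimate above, being careful that the constants depend only on $t_0$, $p$, and the uniformity, not on $\ell$.
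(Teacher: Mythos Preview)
Your overall architecture matches the paper's: Theorem \ref{zero_density} and Proposition \ref{check_bound} handle one direction, and Proposition \ref{v0_appears} drives the other. The difference is in how you convert Proposition \ref{v0_appears} into recurrence of $\alpha$, and there your sketch has a real gap that you yourself flag but do not close.

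You choose $N$ with $p^N\geq\ell$ so that $w$ sits inside $\sigma^N(V(i))\sigma^N(V(i+1))$ for some $i$, and then you invoke bounded recurrence of $V(0)$. But you never connect these two facts: nothing in your argument places $w$ inside $\sigma^N(V(0))$, so recurrences of $V(0)$ need not produce recurrences of $w$. Bounded recurrence of the single letter $V(0)$ together with uniformity of $\sigma$ does \emph{not} by itself force linear recurrence of $\alpha$, and the ``window-counting'' estimate you outline uses the wrong anchor. The paper avoids this by choosing the exponent from the \emph{first-occurrence index} of $w$ rather than its length: pick $t_1$ so that the first occurrence of $w$ ends before index $p^{t_1}$, which guarantees $w$ lies inside $\sigma^{t_1}(V(0))$. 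Then apply $\sigma^{t_1}$ to Proposition \ref{v0_appears}: since $V(0)$ occurs in $\sigma^{t_0}(V(n))$ for every $n$, the block $\sigma^{t_1}(V(0))$, and hence $w$, occurs in $\sigma^{t_1+t_0}(V(n))$ for every $n$. Writing $\alpha=\sigma^{t_1+t_0}(V(0))\sigma^{t_1+t_0}(V(1))\cdots$ gives uniform recurrence immediately. This yields only uniform recurrence (the gap bound $2p^{t_1+t_0}$ depends on the first-occurrence index, not on $\ell$); the paper then cites Theorem 10.8.4 of \cite{ShallitBook} to upgrade uniform recurrence of a $p$-automatic sequence to linear recurrence. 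So the missing idea is twofold: anchor $w$ in $\sigma^{t_1}(V(0))$ via its first occurrence, and offload the ``linear'' part to the general automatic-sequences theorem rather than trying to get $C_w=O(\ell)$ directly.
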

\begin{proof}
If there exists an $n\in\N$ such that $p\mid \ct{P^n}$, then Proposition \ref{check_bound} tells us that there is such an $n<B_{P,p}$. And Theorem \ref{zero_density} tells us that $\ct{P^nQ}\bmod p$ is not uniformly recurrent.\\

For the converse, we show that $\alpha$ from Definition \ref{morph} is linearly recurrent, from which the result follows since our sequence is the image of $\alpha$ under $\code_Q$. Theorem 10.8.4 of \cite{ShallitBook} states that if a $p$-automatic sequence is uniformly recurrent, it is linearly recurrent. Thus, it is sufficient for us to show that $\alpha$ is uniformly recurrent.\\

If $w$ appears as a word in $\alpha$, and its first occurrence ends by index $i<p^{t_1}$, then $w$ appears in $\sigma^{t_1}(V(0)) = V(0)V(1)\cdots V(p^{t_1}-1)$ since $\sigma$ is $p$-uniform. We can conclude from Proposition \ref{v0_appears} that for all $n\in\N$, and all words $w$ appearing in $\alpha$, $$\sigma^{t_1 + t_0}(V(n)) = \sigma^{t_1}\left(x_0V(0)y_0\right) = \sigma^{t_1}(x_0)\sigma^{t_1}(V(0))\sigma^{t_1}(y_0) = \sigma^{t_1}(x_0)x_1wy_1\sigma^{t_1}(y_0).$$
In other words, $w$ appears in every $\sigma^{t_1 + t_0}(V(n))$, and since $$\alpha = \sigma^{t_1 + t_0}(\alpha) = \sigma^{t_1 + t_0}(V(0))\sigma^{t_1 + t_0}(V(1))\sigma^{t_1 + t_0}(V(2))\cdots$$ and $\sigma$ is uniform (so that $\sigma^{t_1 + t_0}$ is also uniform), this implies that $\alpha = V(0)V(1)V(2)\cdots$ is uniformly recurrent.
\end{proof}

\begin{cor}\label{combinations}
Let $F: \F_p^\ell\rightarrow X$ (which is deterministic) for some $\ell\in\N$ and let $b_n = F\left(\ct{P^{n+k_1}Q_1}\bmod p, \ldots, \ct{P^{n+k_\ell}Q_\ell}\bmod p\right)$ where the $k_i$ are arbitrary non-negative integers and the $Q_i$ are arbitrary Laurent polynomials. Then $b_n$ is linearly recurrent if and only if for all $n\in\N$, $p\nmid \ct{P^n}$. Furthermore, in the case where there exists $n$ such that $p\mid \ct{P^n}$, then the frequency of $F(0,\ldots,0)$ in $b_n$ is $1$. For example, if $F$ takes a linear combination of its inputs then we get that $b_n = \sum_{i=0}^\ell \beta_i\cdot\left(\ct{P^{n+k_i}Q_i}\bmod p\right)$ for arbitrary $\beta_i$ is linearly recurrent if and only if $p\nmid \ct{P^n}$ for all $n$.
\end{cor}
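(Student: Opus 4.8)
The goal is to prove Corollary \ref{combinations}, which extends Theorem \ref{main} from a single constant term sequence to any deterministic function $F$ applied to finitely many shifted constant term sequences. The plan is to reduce this to the uniform recurrence of the combined sequence $\alpha$ from Definition \ref{morph}, exactly as in the proof of Theorem \ref{main}, and to treat the ``frequency $1$'' direction by the same density argument used in Theorem \ref{zero_density}.

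\begin{proof}
We handle the two directions separately, mirroring the proof of Theorem \ref{main}.

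For the ``if'' direction, suppose $p\nmid\ct{P^n}$ for all $n$. The key observation is that each input sequence $\ct{P^{n+k_i}Q_i}\bmod p$ is a shift (by $k_i$) of a coding of $\alpha$, namely $\code_{Q_i}(\alpha)$. Since $b_n$ is a deterministic function $F$ of finitely many such shifted codings, it is itself obtained by applying a single coding $\tilde F$ to the shifted ``stacked'' fixed point whose $n$th character records the tuple $\bigl(V(n+k_1),\ldots,V(n+k_\ell)\bigr)$. Because all shifts are by fixed constants, $b_n$ is a coding of a sequence that is itself $p$-automatic and is built from $\alpha$ in a shift-invariant way. The cleanest route is to note that uniform recurrence of $\alpha$ (established in the proof of Theorem \ref{main}) passes to any such construction: any word $w'$ appearing in $b_n$ is the $\tilde F$-image of a word in the stacked sequence, which in turn appears whenever a sufficiently long window of $\alpha$ appears; since $\alpha$ is uniformly recurrent and $F$ together with the finitely many shifts depends only on bounded windows, $b_n$ is uniformly recurrent as well. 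Then Theorem 10.8.4 of \cite{ShallitBook} upgrades uniform recurrence to linear recurrence for $p$-automatic sequences, giving linear recurrence of $b_n$.

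For the ``only if'' direction, suppose there exists $n$ with $p\mid\ct{P^n}$. By Theorem \ref{zero_density}, the set of indices $n$ with $V(n)=0$ has density $1$. Whenever $V(n+k_i)=0$ for all $i=1,\ldots,\ell$ simultaneously, each input $\ct{P^{n+k_i}Q_i}\equiv\code_{Q_i}(V(n+k_i))=0$, so $b_n=F(0,\ldots,0)$. Since each individual bad event $V(n+k_i)\neq 0$ has density $0$ and there are only finitely many $i$, the union still has density $0$, so the simultaneous-zero event has density $1$. Hence $b_n=F(0,\ldots,0)$ with frequency $1$, and in particular $b_n$ is not uniformly recurrent (the constant value $F(0,\ldots,0)$ occurs on a density-$1$ set, forcing arbitrarily long runs on which any other word cannot recur boundedly). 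The linear-combination example follows by taking $F(y_1,\ldots,y_\ell)=\sum_i\beta_i y_i$, for which $F(0,\ldots,0)=0$.

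The main obstacle is the ``if'' direction: making precise that uniform recurrence is preserved under the passage from $\alpha$ to $b_n$. The subtlety is that $b_n$ depends on several shifts $V(n+k_i)$ at once, so one must argue at the level of the stacked fixed point rather than a single coding. I expect the careful step to be verifying that a word of $b_n$ corresponds to a bounded window of $\alpha$ (accounting for the maximal shift $\max_i k_i$), so that the recurrence constant for $w'$ in $b_n$ is controlled by the recurrence constant of the associated $\alpha$-window; the finiteness of $\ell$ and of the shifts is exactly what keeps this window bounded.
\end{proof}
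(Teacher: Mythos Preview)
Your proposal is correct and follows essentially the same approach as the paper: both reduce to the (linear) recurrence of $\alpha$ established in the proof of Theorem~\ref{main}, observe that any length-$L$ word of $b_n$ is determined by a window of $\alpha$ of length $L+O(1)$ (accounting for the shifts $k_i$), and invoke Theorem~\ref{zero_density} to get frequency~$1$ of $F(0,\ldots,0)$ in the other case. The only minor difference is that the paper transfers \emph{linear} recurrence directly from $\alpha$ to $b_n$ via the bounded-window argument, whereas you transfer \emph{uniform} recurrence and then re-apply Theorem~10.8.4 of \cite{ShallitBook} to $b_n$ itself---which obliges you to note that $b_n$ is $p$-automatic; the paper's route avoids that extra check.
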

\begin{proof}
The proof of Theorem \ref{main} actually shows that $\alpha = V(0)V(1)V(2)\cdots$ is linearly recurrent. Since each word beginning with $b_{n_0}$ of length $L$ is determined by the word $V(n_0-\min(k_i))\cdots V(n_0+\max(k_i)+L)$, we can conclude that the sequence $b_n$ is linearly recurrent when $\alpha$ is. Furthermore, if $\alpha$ is not linearly recurrent, it contains $0$ vectors with frequency $1$ (by Theorem \ref{zero_density}), and hence $b_n$ contains $F(0,\ldots,0)$ with frequency $1$.
\end{proof}

We conclude this section by noting that even in the case where $p\mid \ct{P^{n_0}}$ for some $n_0$, resulting in $\alpha$ containing $0$ vectors with frequency $1$, we still have that $\alpha$ is recurrent (see below) so long as there exists an $n$ such that $p\nmid \ct{P^n}$.

\begin{definition}
A sequence $s_n$ is called \emph{recurrent} if for every occurrence of a word, $w = s_is_{i+1}\cdots s_{i+\ell-1}$, there is another later occurrence. I.e., there is a $j > 0$ such that $w = s_{i+j}s_{i+j+1}\cdots s_{i + j + \ell - 1}$. This is equivalent to saying that every word that appears recurs infinitely often, but there is no bound on gaps between occurrences.
\end{definition}

\begin{prop}\label{recurrent}
If $P$ and $Q$ are Laurent polynomials and $p$ is prime, and there exists some $n_0>0$ such that $p\nmid \ct{P^{n_0}}$, then the sequence $b_n = \ct{P^nQ}\bmod p$ is recurrent.
\end{prop}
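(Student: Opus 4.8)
The plan is to reduce the statement to the recurrence of the infinite word $\alpha = V(0)V(1)V(2)\cdots$ from Definition \ref{morph}. Since $b_n = \code_Q(\alpha)$ and $\code_Q$ is a coding, any factor of $b_n$ of length $\ell$ at position $i$ is the letter-by-letter image of the factor $\alpha[i]\cdots\alpha[i+\ell-1]$ of $\alpha$; if this factor of $\alpha$ recurs infinitely often, then so does its $\code_Q$-image. Hence it suffices to prove that $\alpha$ is recurrent, i.e., that every word occurring in $\alpha$ occurs infinitely often.

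The heart of the argument, and the step I expect to be the main obstacle, is to show that $V(0)$ itself occurs infinitely often in $\alpha$ using only the existence of a \emph{single} $n_0 > 0$ with $p \nmid \ct{P^{n_0}}$. (Contrast this with Theorem \ref{main}, where the hypothesis $p \nmid \ct{P^n}$ for all $n$ lets Proposition \ref{v0_appears} place $V(0)$ inside every $\sigma^{t_0}(V(n))$, yielding the stronger conclusion of uniform recurrence.) The idea is to manufacture infinitely many indices whose $V$-value has nonzero constant term. Writing $c = \ct{P^{n_0}} \not\equiv 0 \pmod p$ and iterating Lemma \ref{Vlemma}, I have $V(p^k n_0) = \gamma(0)^k V(n_0)$; by Lemma \ref{zero_out}, for every $k \geq s$ this equals $e_{C,C}\cdot V(n_0) = c\,V(0)$. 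Thus the single letter $c\,V(0)$ occurs in $\alpha$ at each position $p^k n_0$ with $k \geq s$, i.e. at arbitrarily large positions. Since $\ct{P^{p^k n_0}} = V(p^k n_0)[C] = c \not\equiv 0$, Proposition \ref{v0_appears} applies to these indices and gives $V(0)$ as a character of $\sigma^{t_0}(c\,V(0))$. Because $\alpha = \sigma^{t_0}(\alpha)$, each occurrence of $c\,V(0)$ at a position $j$ contributes the block $\sigma^{t_0}(c\,V(0))$, which contains $V(0)$, to $\alpha$ at positions $[jp^{t_0}, (j+1)p^{t_0})$; letting $j = p^k n_0 \to \infty$ then shows that $V(0)$ occurs at arbitrarily large positions.

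Granting this, recurrence of $\alpha$ follows exactly as in the converse direction of Theorem \ref{main}. Given any word $w$ occurring in $\alpha$, it already occurs in some prefix $\sigma^{t_1}(V(0)) = V(0)\cdots V(p^{t_1}-1)$ for $t_1$ large enough. For each of the infinitely many positions $j$ with $\alpha[j] = V(0)$, applying $\alpha = \sigma^{t_1}(\alpha)$ shows that $\sigma^{t_1}(V(0))$, and hence $w$, occurs in $\alpha$ within the block at positions $[jp^{t_1}, (j+1)p^{t_1})$; as these $j$ are unbounded, $w$ recurs infinitely often. Therefore $\alpha$ is recurrent, and by the reduction of the first paragraph so is $b_n = \ct{P^nQ} \bmod p$. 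I do not expect the reduction or the final prefix argument to cause difficulty; the only genuinely new ingredient beyond the proof of Theorem \ref{main} is the observation that $\gamma(0)^k V(n_0)$ stabilizes to the nonzero multiple $c\,V(0)$, which furnishes the infinitely many occurrences of $V(0)$ that replace the uniform bound available in the stronger setting.
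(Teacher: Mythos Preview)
Your proof is correct and follows essentially the same strategy as the paper: reduce to recurrence of $\alpha$, then use Proposition~\ref{v0_appears} applied at $n_0$ (or at $p^kn_0$) to produce occurrences of $V(0)$, and finally pull any word $w$ back to a prefix $\sigma^{t_1}(V(0))$. The only organizational difference is that you first manufacture infinitely many positions carrying $V(0)$ (via the detour $\gamma(0)^kV(n_0)=cV(0)$ from Lemma~\ref{zero_out}, then Proposition~\ref{v0_appears}), whereas the paper keeps $n_0$ fixed and instead lets the exponent $t_2$ grow with the index of the given occurrence of $w$, finding the next occurrence inside $\sigma^{t_2+t_0}(V(n_0))$; your extra step through $cV(0)$ is not needed but does no harm.
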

\begin{proof}
Proposition \ref{v0_appears} assures us that $\sigma^{t_0}(V(n_0))$ contains $V(0)$. Therefore, if $w$ is a word in $\alpha$ and we consider an occurrence of $w$ that ends by index $i < p^{t_2}$, then $w$ appears in $\sigma^{t_2}(V(0))$ and thus $w$ appears in $\sigma^{t_2 + t_0}(V(n_0))$. Finally, since $$\alpha = \sigma^{t_2 + t_0}(\alpha) = \sigma^{t_2 + t_0}(V(0))\sigma^{t_2 + t_0}(V(1))\cdots\sigma^{t_2 + t_0}(V(n_0))\cdots$$ we can conclude that $w$ recurs and consequently the corresponding words in $b_n$ do as well. Specifically, the occurrence of $w$ we began with is in the first segment, $\sigma^{t_2+t_0}(V(0))$, of the above decomposition of $\alpha$ and there is a subsequent occurrence in the $\sigma^{t_2+t_0}(V(n_0))$ segment. Note that the obstruction to uniform recurrence is the dependence of $t_2$ on the index of the occurrence of $w$ since there can be arbitrarily large gaps between constructed occurrences.
\end{proof}

\begin{question}
What can be said about the recurrence (or non-recurrence) of $\ct{P^n}$ when $\ct{P^n}\equiv 0$ for all $n>0$? For example, this is true for $P(x) = x^{p-1} + x^{-1}$.
\end{question}

\section{Extending Results to Prime Powers}
Analogous results to Theorem \ref{main} and Corollary \ref{combinations} hold when we consider constant term sequences modulo general prime powers, but some extra care is required in this case. To avoid unnecessary confusion in the case of primes (as presented above), the discussion of prime powers has been excised to this section.\\

The construction of the Rowland-Zeilberger automaton in \cite{rowlandzeilberger} becomes mildly more complicated mod $p^a$ for $a>1$ because Lemma \ref{RZ_cong} no longer holds. However, after reading fewer than $a$ base-$p$ digits, we have an analogous result.

\begin{definition}
When the Laurent polynomial $P$ with coefficients in $\Z/p^a\Z$ is fixed, we define $\tilde{P}(\x)$ to be such that $P(x_1, \ldots, x_r)^{p^{a-1}} \equiv \tilde{P}(x_1^{p^\ell}, \ldots, x_r^{p^\ell})\pmod{p^a}$ where $\ell$ is taken to be as large as possible. Note that since $P(x_1, \ldots, x_r)^p\equiv P(x_1^p,\ldots, x_r^p)\pmod p$, it follows that $$P(x_1,\ldots, x_r)^{p^a} = (P(x_1,\ldots, x_r)^p)^{p^{a-1}}\equiv P(x_1^p, \ldots, x_r^p)^{p^{a-1}}\pmod {p^a}.$$ Thus, $\tilde{P}(x_1^{p^\ell}, \ldots, x_r^{p^\ell})^p\equiv \tilde{P}(x_1^{p^\ell\cdot p}, \ldots, x_r^{p^\ell\cdot p})\pmod{p^a}$, or equivalently, by a change of variables, $$\tilde{P}(x_1,\ldots, x_r)^p\equiv \tilde{P}(x_1^p,\ldots, x_r^p)\pmod{p^a}.$$
Consequently, $\tilde{P}$ is stable under Rowland-Zeilberger transitions, i.e., $\tilde{\tilde{P}} = \tilde{P}$. Also note that when $a=1$, this section coincides with the previous one if we take $\tilde{P} = P$.
\end{definition}

\begin{lemma}\label{zero_conversion}
For all Laurent polynomials $P$ and primes $p$, there exists an $n_0\in\N$ such that $p\mid\ct{P^{n_0}}$ if and only if there exists an $n_1\in\N$ such that $p\mid\ct{\tilde{P}^{n_1}}$. Furthermore, if $n_0$ exists one can take $n_1$ to be $n_0$.
\end{lemma}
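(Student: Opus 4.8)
The plan is to establish the much stronger fact that $\ct{P^n}\equiv\ct{\tilde{P}^n}\pmod p$ for \emph{every} $n\in\N$, from which the biconditional and the claim that one may take $n_1 = n_0$ are both immediate. The point is that although $\tilde{P}$ is defined by a congruence modulo $p^a$, reducing that congruence modulo $p$ and pairing it with the generalized Freshman's dream exactly determines the relationship between $P$ and $\tilde{P}$ at the level of constant terms mod $p$.

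First I would iterate the Freshman's dream $P(x_1,\ldots,x_r)^p\equiv P(x_1^p,\ldots,x_r^p)\pmod p$ to obtain $P(x_1,\ldots,x_r)^{p^{a-1}}\equiv P(x_1^{p^{a-1}},\ldots,x_r^{p^{a-1}})\pmod p$. On the other hand, the defining congruence $P^{p^{a-1}}\equiv \tilde{P}(x_1^{p^\ell},\ldots,x_r^{p^\ell})\pmod{p^a}$ holds a fortiori modulo $p$. Combining the two by transitivity gives the identity of Laurent polynomials over $\F_p$
$$P(x_1^{p^{a-1}},\ldots,x_r^{p^{a-1}}) \equiv \tilde{P}(x_1^{p^\ell},\ldots,x_r^{p^\ell})\pmod p.$$

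Next I would raise this identity to the $n$th power. Since substitution of variables is a ring homomorphism (so it commutes with taking powers), the left side becomes $(P^n)(x_1^{p^{a-1}},\ldots,x_r^{p^{a-1}})$ and the right side becomes $(\tilde{P}^n)(x_1^{p^\ell},\ldots,x_r^{p^\ell})$. I then extract constant terms from both sides. The crucial — but elementary — observation is that a substitution of the form $x_i\mapsto x_i^{p^j}$ sends the monomial $\x^k$ to $\x^{p^jk}$, so it carries the constant term ($k=0$) of a Laurent polynomial to the constant term of its image and sends no other term to the constant term; in particular $\ct{(P^n)(x_1^{p^{a-1}},\ldots,x_r^{p^{a-1}})} = \ct{P^n}$, and likewise $\ct{(\tilde{P}^n)(x_1^{p^\ell},\ldots,x_r^{p^\ell})} = \ct{\tilde{P}^n}$. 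Therefore $\ct{P^n}\equiv\ct{\tilde{P}^n}\pmod p$ for all $n$.

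With this in hand the lemma follows at once: for each fixed $n$ we have $p\mid\ct{P^n}$ if and only if $p\mid\ct{\tilde{P}^n}$, so an $n_0$ with $p\mid\ct{P^{n_0}}$ is precisely an $n_1 = n_0$ with $p\mid\ct{\tilde{P}^{n_1}}$, and conversely. The only step requiring any care is the constant-term-preservation claim; I expect this to be the main obstacle, though a mild one, and it is worth emphasizing that it applies uniformly no matter how $\ell$ compares to $a-1$, which is exactly why no case analysis on the size of $\ell$ is needed.
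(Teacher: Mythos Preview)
Your argument is correct and establishes the stronger statement $\ct{P^n}\equiv\ct{\tilde{P}^n}\pmod p$ for all $n$, from which the lemma is immediate. Your route, however, differs from the paper's. The paper first observes that $\ct{\tilde{P}^n}=\ct{P^{p^{a-1}n}}$ (an exact equality, coming from the definition of $\tilde{P}$ and invariance of constant terms under variable rescaling), and then invokes the linear-representation machinery of Section~3---specifically Lemma~\ref{zero_out} and Lemma~\ref{Vlemma}---to deduce $\gamma(0)^s V(n_0)=\gamma(0)^{s+a-1}V(n_0)=\gamma(0)^s V(p^{a-1}n_0)$, which unpacks to $\ct{P^{n_0}}\equiv\ct{P^{p^{a-1}n_0}}\pmod p$. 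You instead reduce the defining congruence of $\tilde{P}$ modulo $p$, match it against the iterated Freshman's dream, and read off the equality of constant terms directly; no appeal to $\gamma$ or $V$ is needed. Your approach is more elementary and entirely self-contained, while the paper's has the expository virtue of exercising the machinery it has just built. Both ultimately prove the same identity $\ct{P^n}\equiv\ct{\tilde{P}^n}\pmod p$.
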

\begin{proof}
Because $\ct{\tilde{P}(x_1,\ldots, x_r)^n} = \ct{\tilde{P}(x_1^{p^\ell}, \ldots, x_r^{p^\ell})^n} = \ct{P(x_1,\ldots, x_r)^{p^{a-1}n}}$, the sequence $(\ct{\tilde{P}^n})_{n\in\N}$ is a subsequence of $(\ct{P^n})_{n\in\N}$. Thus, it is sufficient to show that when $\ct{P^{n_0}}\not\equiv 0\pmod p$, then $\ct{\tilde{P}^{n_0}}\not\equiv 0\pmod p$.\\

By Lemma \ref{zero_out} and Definition \ref{Vdef} we have that $\gamma(0)^s\cdot V(n_0)\neq 0$. Furthermore, using Lemmas \ref{zero_out} and \ref{Vlemma}, $$\gamma(0)^s\cdot V(n_0) = \gamma(0)^{s+a-1}\cdot V(n_0) = \gamma(0)^s\cdot \gamma(0)^{a-1}\cdot V(n_0) = \gamma(0)^s\cdot V(p^{a-1}n_0)\neq 0,$$ from which we can conclude that $\ct{\tilde{P}^{n_0}}\equiv \ct{P^{p^{a-1}n_0}}\not\equiv 0\pmod p$.
\end{proof}

We now apply some simple tricks to describe $\ct{P^nQ}\pmod{p^a}$ in terms of sequences of the form $\ct{\tilde{P}^nQ}\pmod{p^a}$. And for $\tilde{P}$, since $\ct{\tilde{P}(x_1,\ldots, x_r)^p}\equiv \ct{\tilde{P}(x_1^p, \ldots, x_r^p)}\pmod{p^a}$, all of our completed analysis applies so long as we substitute $\F_p$ coefficients with coefficients from $\Z/p^a\Z$ and various symbols with their (mod $p^a$)-counterparts, which are labeled with the subscript $2$ in this section. These translations are so direct that we omit proofs for most of the results and instead reference the analogous results above.\\

\begin{prop}\label{conversion}
For all Laurent polynomials $P$ and $Q$ with coefficients in $\Z/p^a\Z$, all $n\in\N$, and all $k\in\Z/p^{a-1}\Z$, $$\ct{P^{p^{a-1}n+k}Q}\equiv \ct{\tilde{P}^n\cdot\Lambda_{p^\ell}(P^kQ)}\pmod{p^a}.$$
\end{prop}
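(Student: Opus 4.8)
The plan is to follow the proof of Lemma~\ref{RZ_cong} almost verbatim, replacing the generalized Freshman's dream with the defining congruence for $\tilde{P}$ and replacing $\Lambda_p$ with $\Lambda_{p^\ell}$. First I would invoke the definition of $\tilde{P}$, namely $P(x_1,\ldots,x_r)^{p^{a-1}}\equiv\tilde{P}(x_1^{p^\ell},\ldots,x_r^{p^\ell})\pmod{p^a}$. Since congruences modulo $p^a$ are preserved under taking $n$th powers and multiplying by a fixed Laurent polynomial, raising both sides to the $n$th power and multiplying by $P^kQ$ gives
$$P^{p^{a-1}n+k}Q\equiv\tilde{P}(x_1^{p^\ell},\ldots,x_r^{p^\ell})^n\cdot P^kQ\pmod{p^a},$$
so that passing to constant terms yields $\ct{P^{p^{a-1}n+k}Q}\equiv\ct{\tilde{P}(x_1^{p^\ell},\ldots,x_r^{p^\ell})^n\cdot P^kQ}\pmod{p^a}$.

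The second and only substantive step is to establish the constant-term identity that plays, here, the role of the remark in Lemma~\ref{RZ_cong} that $x_i^p\mapsto x_i$ does not affect constant terms: for all Laurent polynomials $A$ and $B$,
$$\ct{A(x_1^{p^\ell},\ldots,x_r^{p^\ell})\cdot B}=\ct{A\cdot\Lambda_{p^\ell}(B)}.$$
I would prove this by the same coefficient bookkeeping as in Proposition~\ref{bookkeeping}: writing $A=\sum_j c_j\x^j$ and $B=\sum_i b_i\x^i$, the substitution $x_t\mapsto x_t^{p^\ell}$ sends $A$ to $\sum_j c_j\x^{p^\ell j}$, so the left-hand side collects exactly the pairs with $p^\ell j+i=0$, giving $\sum_j c_j b_{-p^\ell j}$; on the other hand $\coef{j'}{\Lambda_{p^\ell}(B)}=b_{p^\ell j'}$, so the right-hand side collects the pairs with $j+j'=0$, again giving $\sum_j c_j b_{-p^\ell j}$. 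Applying this identity with $A=\tilde{P}^n$ and $B=P^kQ$ turns $\ct{\tilde{P}(x_1^{p^\ell},\ldots,x_r^{p^\ell})^n\cdot P^kQ}$ into $\ct{\tilde{P}^n\cdot\Lambda_{p^\ell}(P^kQ)}$, which closes the chain of congruences.

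I do not expect a genuine obstacle, since the argument is a faithful analog of Lemma~\ref{RZ_cong}; the only place demanding care is the exponent bookkeeping above, where one must check that the terms of $B$ surviving a pairing with the $p^\ell$-divisible exponents of $A(x_1^{p^\ell},\ldots,x_r^{p^\ell})$ to produce a constant are precisely the terms that $\Lambda_{p^\ell}$ retains and rescales. It is worth recording that the maximality of $\ell$ in the definition of $\tilde{P}$ is never used---only the defining congruence is---and that the left-hand side depends on $k$ only as an integer, so in interpreting the hypothesis $k\in\Z/p^{a-1}\Z$ one simply fixes the representative $k\in\{0,1,\ldots,p^{a-1}-1\}$, which is exactly the block of $a-1$ base-$p$ digits read by the corresponding automaton.
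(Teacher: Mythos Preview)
Your proposal is correct and follows essentially the same argument as the paper's own proof, which is just the three-line chain
\[
\ct{P^{p^{a-1}n+k}Q}=\ct{(P^{p^{a-1}})^nP^kQ}\equiv\ct{\tilde P(x_1^{p^\ell},\ldots,x_r^{p^\ell})^nP^kQ}=\ct{\tilde P^n\,\Lambda_{p^\ell}(P^kQ)}.
\]
Your write-up is more explicit than the paper's in justifying the last equality via coefficient bookkeeping, but the route is identical.
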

\begin{proof}
\begin{align*}
\ct{P(x_1, \ldots, x_r)^{p^{a-1}n+k}Q(x_1, \ldots, x_r)} &= \ct{(P(x_1, \ldots, x_r)^{p^{a-1}})^nP(x_1, \ldots, x_r)^kQ(x_1, \ldots, x_r)}\\
&\equiv \ct{\tilde{P}(x_1^{p^\ell}, \ldots, x_r^{p^\ell})^nP(x_1, \ldots, x_r)^kQ(x_1, \ldots, x_r)}\\
&\equiv \ct{\tilde{P}(x_1, \ldots, x_r)^n\Lambda_{p^\ell}(P(x_1, \ldots, x_r)^kQ(x_1, \ldots, x_r))}.
\end{align*}
\end{proof}

Now we can compute $\ct{P^{n'}Q}\bmod{p^a}$ by reading the $a-1$ least significant base-$p$ digits of $n'$, call this number $k$, and then use the digits remaining, call this number $n$, and then compute $\ct{\tilde{P}^n\cdot\Lambda_{p^\ell}(P^kQ)}\bmod{p^a}$ where $\tilde{P}$ satisfies $\tilde{P}(x_1, \ldots, x_r)^p\equiv \tilde{P}(x_1^p, \ldots, x_r^p)\pmod{p^a}$. Using this congruence, we obtain lemmas analogous to Lemma \ref{RZ_cong} and Lemma \ref{deg_bound} for $\tilde{P}$:

\begin{lemma} For all $k\in\F_p$, $n\in\N$ and all Laurent polynomials, $\tilde{P}(\x)$, with coefficients in $\Z/p^a\Z$ satisfying $\tilde{P}(x_1, \ldots, x_r)^p\equiv \tilde{P}(x_1^p, \ldots, x_r^p)\pmod{p^a}$, $$\ct{\tilde{P}^{pn + k}Q}\equiv \ct{\tilde{P}^n\cdot\Lambda_p(\tilde{P}^kQ)}\pmod{p^a}.$$
\end{lemma}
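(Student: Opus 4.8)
The plan is to mirror the proof of Lemma \ref{RZ_cong} almost verbatim, with one crucial substitution: where that proof invoked the mod-$p$ Freshman's dream $P^p\equiv P(x_1^p,\ldots,x_r^p)\pmod p$, here I would instead invoke the stronger congruence $\tilde{P}(x_1,\ldots,x_r)^p\equiv \tilde{P}(x_1^p,\ldots,x_r^p)\pmod{p^a}$, which is precisely the hypothesis placed on $\tilde{P}$. This is the whole reason $\tilde{P}$ was introduced, so the mod-$p$ reasoning of Lemma \ref{RZ_cong} simply proceeds mod $p^a$ here.

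First I would factor $\tilde{P}^{pn+k} = (\tilde{P}^p)^n\cdot\tilde{P}^k$ and use the hypothesized congruence, which, being valid mod $p^a$, may be raised to the $n$th power and multiplied by $\tilde{P}^kQ$ while preserving equivalence mod $p^a$. This yields
\begin{align*}
\ct{\tilde{P}(x_1,\ldots,x_r)^{pn+k}Q} &\equiv \ct{\tilde{P}(x_1^p,\ldots,x_r^p)^n\,\tilde{P}(x_1,\ldots,x_r)^k Q(x_1,\ldots,x_r)}\pmod{p^a}.
\end{align*}
Second, as in Lemma \ref{RZ_cong}, I would observe that the change of variables $x_i^p\mapsto x_i$ does not affect constant terms, and that this, combined with deleting terms whose exponents are not all divisible by $p$, is exactly the operation performed by $\Lambda_p$. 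Expanding $\tilde{P}(x_1^p,\ldots,x_r^p)^n$ as a sum over exponents all divisible by $p$ and matching against $\tilde{P}^kQ$ to extract the constant term, one obtains $\ct{\tilde{P}(x_1^p,\ldots,x_r^p)^n\,\tilde{P}^kQ} = \ct{\tilde{P}^n\cdot\Lambda_p(\tilde{P}^kQ)}$, completing the chain.

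The main point — and the reason there is essentially no obstacle here — is that the only arithmetic input is the upgraded Freshman's dream baked into the definition of $\tilde{P}$; everything downstream is the same combinatorial bookkeeping (Proposition \ref{bookkeeping}) used for the prime case, now carried out with coefficients in $\Z/p^a\Z$ rather than $\F_p$. The one detail worth checking carefully is that $\Lambda_p$ and the change of variables interact correctly with coefficients mod $p^a$, but since $\Lambda_p$ acts purely on exponents and is $\Z/p^a\Z$-linear, this is immediate. Thus I expect this lemma to follow by direct translation of the earlier argument, establishing the mod-$p^a$ analogue of Lemma \ref{RZ_cong} needed to replicate the Rowland--Zeilberger construction and the $p$-linear representation for $\tilde{P}$.
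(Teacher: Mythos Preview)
Your proposal is correct and is exactly the approach the paper intends: the paper omits the proof of this lemma, stating that ``these translations are so direct that we omit proofs for most of the results and instead reference the analogous results above,'' and your argument is precisely the direct translation of the proof of Lemma~\ref{RZ_cong} with the Freshman's dream replaced by the hypothesized congruence on $\tilde{P}$.
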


\begin{lemma}
For a fixed $\tilde{P}$, $$\tilde{m} := \max(\deg (\tilde{P}) - 1, \deg (Q_0))\geq \deg\Lambda_p^{k_t}(\cdots \Lambda_p^{k_0}(Q_0)\cdots ).$$ That is, $\tilde{m}$ is a bound on the degree of every polynomial that is the result of iterating $\Lambda_p^k$ on input $Q_0$ for all values of $k\in\F_p$. In particular, $\tilde{m}$ bounds the degree of every $Q_0$ appearing in the Rowland-Zeilberger construction.
\end{lemma}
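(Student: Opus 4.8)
The plan is to transcribe the proof of Lemma~\ref{deg_bound} essentially verbatim, replacing $P$ by $\tilde{P}$, $m$ by $\tilde{m}$, $Q$ by $Q_0$, and the coefficient field $\F_p$ by the coefficient ring $\Z/p^a\Z$, while noting at each step that nothing in that argument used the modulus or the fact that the coefficients lay in a field. Here $\Lambda_p^k(Q) := \Lambda_p(\tilde{P}^k Q)$ is $\Z/p^a\Z$-linear, since it deletes terms and rescales exponents, both of which are additive operations. Hence, writing $Q_1 = \Lambda_p^k(Q_0) = \sum_i q_i\,\Lambda_p(\tilde{P}^k \x^i)$ for $Q_0 = \sum_i q_i \x^i$, we get $\deg Q_1 \leq \max_i \deg\bigl(\Lambda_p(\tilde{P}^k \x^i)\bigr)$; this upper bound is the only consequence of linearity that the original proof used, and it survives unchanged over $\Z/p^a\Z$.

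First I would compute, exactly as in Lemma~\ref{deg_bound} but with $\deg\tilde{P}$ in place of $\deg P$, that $\Lambda_p(\tilde{P}^k \x^i)$ is supported on exponents $(i+j)/p$ with $|j| \leq k\deg\tilde{P}$, so that $|(i+j)/p| \leq ((p-1)\deg\tilde{P} + |i|)/p$ and therefore $\deg Q_1 \leq \frac{p-1}{p}\deg\tilde{P} + \frac{1}{p}\deg Q_0$. This is a purely combinatorial statement about exponents of Laurent monomials and is completely insensitive to whether coefficients are reduced mod $p$ or mod $p^a$. I would then run the identical induction to obtain $\deg Q_n \leq \deg\tilde{P} + (\deg Q_0 - \deg\tilde{P})/p^n$ for any $Q_n$ reachable from $Q_0$ in $n$ steps, and conclude with the same three-case analysis: if $\deg Q_0 < \deg\tilde{P}$ the degrees stay below $\deg\tilde{P}$; if $\deg Q_0 > \deg\tilde{P}$ the degrees strictly decrease until they drop to $\deg\tilde{P}$; and if $\deg Q_0 = \deg\tilde{P}$ the degree is preserved only by the transition $k = p-1$. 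In all cases $\deg Q_n \leq \max(\deg\tilde{P} - 1, \deg Q_0) = \tilde{m}$, and since every state of the Rowland-Zeilberger automaton for $\tilde{P}$ is such a $Q_n$, this bounds the degree of every state.

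The only point requiring genuine attention — rather than a real obstacle — is the bookkeeping check that the degree estimate never invoked cancellation or invertibility. The single place the original proof used ``$\F_p$-linear'' was to bound the degree of a sum by the maximum of the degrees of its terms, and this inequality holds over any commutative coefficient ring; no nonzero coefficient is ever required, so no issue arises from zero divisors in $\Z/p^a\Z$. Because $\deg\tilde{P}$ is finite and $\Lambda_p^k$ preserves the class of bounded-degree Laurent polynomials, this suffices. In particular, the stronger congruence $\tilde{P}(\x)^p \equiv \tilde{P}(x_1^p,\ldots,x_r^p) \pmod{p^a}$ is not needed for the degree bound itself and is only relevant to the companion congruence lemma above.
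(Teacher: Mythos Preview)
Your proposal is correct and matches the paper's approach exactly: the paper omits the proof of this lemma entirely, stating that ``These translations are so direct that we omit proofs for most of the results and instead reference the analogous results above,'' i.e., Lemma~\ref{deg_bound}. You have carried out precisely that transcription, correctly observing that the degree estimates are purely combinatorial statements about exponents and never rely on the coefficient ring being a field or on any particular modulus.
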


Note that by Proposition \ref{conversion}, the maximum that defines $\tilde{m}$ must be taken over $\deg(\tilde{P})-1$ and $Q_0 = \deg(\Lambda_{p^\ell}(P^kQ))$ for all $k\in\Z/p^{a-1}\Z$ for all values of $Q$ we are considering.

\begin{definition}
We wish to encode Laurent polynomials with coefficients in $\Z/p^a\Z$ of degree $\leq m$ as a product of copies of $\Z/p^a\Z$, and so we must choose a computational basis: Impose an order on $\tilde{T} = [-\tilde{m},\tilde{m}]^r$ (e.g. lexicographical), where $\tilde{T}$ indexes the set of coefficients in a polynomial with degree $\leq \tilde{m}$. If $Q = \sum_{i\in \tilde{T}}q_i\x^i$ then let $\text{vec}(Q)$ be the row vector $(q_i)_{i\in \tilde{T}}$ and let $\code_Q:((\Z/p^a\Z)^{\tilde{T}})^*\rightarrow(\Z/p^a\Z)^*$ be the coding defined by $\code_Q(\vec{u}) = \text{vec}(Q)\cdot\vec{u}$.
\end{definition}

Just as in Definition \ref{linrepdef}, the map $Q\mapsto \Lambda_p(\tilde{P}^kQ)$ is $(\Z/p^a\Z)$-linear and has the following matrix description:

\begin{definition}
If $\tilde{\gamma}(k) := (\coef{pj-i}{\tilde{P}^k})_{i,j\in \tilde{T}}$, then $\text{vec}(Q)\cdot\tilde{\gamma}(k) = \text{vec}(\Lambda_p(\tilde{P}^kQ))$. Defining $\tilde{\gamma}$ as a matrix-valued string morphism in this way yields the $p$-linear representation $(\text{vec}(Q), \tilde{\gamma}, \tilde{V}(0))$ for the sequence $\left(\ct{\tilde{P}^nQ}\bmod {p^a}\right)_{n\in\N}$, where the only non-zero entry of $\tilde{V}(0)$ is at the index corresponding to the constant term, where it is $1$. More generally, define $\tilde{V}(n)$ to be the column vector $(\ct{\tilde{P}^n\x^i}\bmod{p^a})_{i\in \tilde{T}}\in (\Z/p^a\Z)^{\tilde{T}}$.
\end{definition}

Just as with Lemma \ref{Vlemma}:

\begin{lemma}
For all non-negative integers $n$ and all $k\in\F_p$, $$\tilde{\gamma}(k)\cdot \tilde{V}(n) = \tilde{V}(pn + k).$$
\end{lemma}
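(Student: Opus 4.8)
The plan is to mirror the proof of Lemma~\ref{Vlemma} exactly, since the whole point of this section is that the mod-$p^a$ theory for $\tilde{P}$ is a verbatim translation of the mod-$p$ theory for $P$, with $\F_p$ coefficients replaced by $\Z/p^a\Z$ coefficients. The crucial structural input is the congruence $\ct{\tilde{P}^{pn+k}Q}\equiv \ct{\tilde{P}^n\cdot\Lambda_p(\tilde{P}^kQ)}\pmod{p^a}$ (the $\tilde{P}$-analogue of Lemma~\ref{RZ_cong}), which holds precisely because $\tilde{P}$ was constructed to satisfy $\tilde{P}(x_1,\ldots,x_r)^p\equiv\tilde{P}(x_1^p,\ldots,x_r^p)\pmod{p^a}$. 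This is the one place where working modulo $p^a$ rather than $p$ would normally break the generalized Freshman's dream, but the definition of $\tilde{P}$ restores exactly the identity we need, so the proof goes through unchanged.

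First I would fix a non-negative integer $n$ and $k\in\F_p$, and compute the $i$th entry of $\tilde{V}(pn+k)$ for each $i\in\tilde{T}$. By the definition of $\tilde{V}$, this entry is $\ct{\tilde{P}^{pn+k}\x^i}\bmod p^a$. Next I would apply the $\tilde{P}$-version of Lemma~\ref{RZ_cong} with $Q=\x^i$ to rewrite this as $\ct{\tilde{P}^n\cdot\Lambda_p(\tilde{P}^k\x^i)}\pmod{p^a}$. Then I would invoke Proposition~\ref{bookkeeping} (or rather its mod-$p^a$ reincarnation, the identity $\Lambda_p(\tilde{P}^k\x^i)=\sum_{j\in\tilde{T}}\coef{pj-i}{\tilde{P}^k}\x^j$, which follows from the same one-line bookkeeping computation $\coef{j}{\Lambda_p(\tilde{P}^k\x^i)}=\coef{pj-i}{\tilde{P}^k}$) to expand $\Lambda_p(\tilde{P}^k\x^i)$ as a sum over $j\in\tilde{T}$.

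From there the argument is pure linearity: pulling the finite sum out of the constant-term functional gives $\sum_{j\in\tilde{T}}\coef{pj-i}{\tilde{P}^k}\cdot\ct{\tilde{P}^n\x^j}$, and recognizing $\ct{\tilde{P}^n\x^j}$ as the $j$th entry of $\tilde{V}(n)$ shows this sum is exactly $\bigl(\tilde{\gamma}(k)\cdot\tilde{V}(n)\bigr)[i]$, since $\tilde{\gamma}(k)$ was defined to have $(i,j)$-entry $\coef{pj-i}{\tilde{P}^k}$. As every entry matches, I conclude $\tilde{\gamma}(k)\cdot\tilde{V}(n)=\tilde{V}(pn+k)$.

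I do not anticipate a genuine obstacle here: the content is entirely front-loaded into the construction of $\tilde{P}$ and the preceding lemmas, and what remains is a bookkeeping verification identical in form to Lemma~\ref{Vlemma}. The only point requiring a moment of care is making sure the indexing convention in the definition of $\tilde{\gamma}(k)=(\coef{pj-i}{\tilde{P}^k})_{i,j\in\tilde{T}}$ lines up with the summation index coming out of the $\Lambda_p$ expansion, so that the matrix-times-vector product picks out the correct coefficients; beyond matching these indices, the proof is a direct translation of the earlier computation into the ring $\Z/p^a\Z$.
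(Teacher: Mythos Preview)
Your proposal is correct and is exactly the approach the paper intends: the paper omits the proof entirely, prefacing the lemma with ``Just as with Lemma~\ref{Vlemma}'', and your write-up is precisely the verbatim translation of that computation to $\tilde{P}$, $\tilde{\gamma}$, $\tilde{V}$, and coefficients in $\Z/p^a\Z$. The index check you flag (that $\tilde{\gamma}(k)_{i,j}=\coef{pj-i}{\tilde{P}^k}$ matches the summand coming out of $\Lambda_p(\tilde{P}^k\x^i)$) is the only thing to verify, and it works.
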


\begin{definition}
Just as in Definition \ref{morph}, we use $\tilde{\gamma}$ to define the $p$-uniform string morphism $\tilde{\sigma}: \left((\Z/p^a\Z)^{\tilde{T}}\right)^*\rightarrow \left((\Z/p^a\Z)^{\tilde{T}}\right)^*$ by $$\tilde{\sigma}(\vec{u}) := (\tilde{\gamma}(0)\cdot\vec{u})(\tilde{\gamma}(1)\cdot\vec{u})\cdots (\tilde{\gamma}(p-1)\cdot\vec{u}).$$ Then define $\tilde{\alpha} := \tilde{\sigma}^\omega(\tilde{V}(0)) = \tilde{V}(0)\tilde{V}(1)\tilde{V}(2)\cdots\in \left((\Z/p^a\Z)^{\tilde{T}}\right)^*$.
\end{definition}

Just as with Lemma \ref{zero_out}:

\begin{lemma}
Let $C$ be the index of the polynomial $1$ in $\tilde{T}$'s order. There is an integer $s\in\N$ such that for every $s'\geq s\in\N$, $\tilde{\gamma}(0)^s = \tilde{\gamma}(0^s) = e_{C,C}$, that is to say, it has a $1$ in the row and column corresponding to the constant term and $0$s everywhere else.
\end{lemma}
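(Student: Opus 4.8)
The plan is to replay the proof of Lemma \ref{zero_out} verbatim, replacing $\gamma$, $T$, and $\F_p$ by their prime-power analogues $\tilde{\gamma}$, $\tilde{T}$, and $\Z/p^a\Z$, and to verify that nothing in that argument is sensitive to the coefficient ring. The target matrix $e_{C,C}$ is still pinned down uniquely by its action on row vectors: for every $\vec{u}\in(\Z/p^a\Z)^{\tilde{T}}$ one must have $\vec{u}^{tr}\cdot e_{C,C} = (\vec{u}[C])\,e_C^{tr}$, since testing this identity against each standard basis vector $e_i^{tr}$ reads off the $i$th row of the matrix, forcing every row other than the $C$th to vanish and the $C$th row to equal $e_C^{tr}$. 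This characterization holds over any commutative ring, so it applies unchanged over $\Z/p^a\Z$. Hence it suffices to show that $\text{vec}(Q)\cdot\tilde{\gamma}(0)^{s'} = (\text{vec}(Q)[C])\,e_C^{tr}$ for every Laurent polynomial $Q$ of degree at most $\tilde{m}$ and every sufficiently large $s'$.

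First I would compute the action of a single factor. By the defining relation $\text{vec}(Q)\cdot\tilde{\gamma}(k) = \text{vec}(\Lambda_p(\tilde{P}^kQ))$, setting $k=0$ gives $\text{vec}(Q)\cdot\tilde{\gamma}(0) = \text{vec}(\Lambda_p(\tilde{P}^0Q)) = \text{vec}(\Lambda_p(Q))$. The crucial point is that this transition involves no multiplication by $\tilde{P}$ whatsoever; $\Lambda_p$ merely discards the monomials whose exponent vectors are not divisible by $p$ and then rescales the surviving exponents via $x_j^p\mapsto x_j$. Consequently the degree estimate from the prime-power analogue of Lemma \ref{deg_bound} specializes to $\deg\Lambda_p(Q)\leq \deg Q/p$, and this estimate depends only on the support (the set of exponent vectors) of $Q$, not on the ring in which its coefficients live.

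Next I would iterate. Since every $Q$ with $\text{vec}(Q)\in(\Z/p^a\Z)^{\tilde{T}}$ satisfies $\deg Q\leq \tilde{m}$, applying $\Lambda_p$ repeatedly drives the degree to $0$: after $s'\geq s := \lfloor\log_p(\tilde{m})\rfloor + 1$ steps the polynomial $(\Lambda_p)^{s'}(Q)$ has degree $0$, i.e.\ it is the scalar $\ct{Q}$ regarded as a constant Laurent polynomial. Therefore $\text{vec}(Q)\cdot\tilde{\gamma}(0)^{s'} = \text{vec}(\ct{Q}) = (\text{vec}(Q)[C])\,e_C^{tr}$, which is exactly the identity we needed for arbitrary $Q$, and hence $\tilde{\gamma}(0)^{s'} = e_{C,C}$ for all $s'\geq s$.

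The proof presents no genuine obstacle, since it is a direct transcription; the place I would be most careful is precisely the claim that passing from $\F_p$ to $\Z/p^a\Z$ changes nothing. Two points deserve explicit remark: that the stability property $\tilde{P}(x_1^p,\ldots,x_r^p)\equiv\tilde{P}(\x)^p\pmod{p^a}$ plays no role here, because only the $k=0$ column (where $\tilde{P}^0 = 1$) is used; and that the possibility of $\ct{Q}$ being a zero divisor in $\Z/p^a\Z$ is harmless, because both the characterization of $e_{C,C}$ and the collapse of the $\Lambda_p$-iterates to the constant term are purely structural facts about supports and the standard basis, independent of any multiplicative invertibility.
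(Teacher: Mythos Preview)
Your proposal is correct and follows essentially the same approach as the paper: the paper itself omits the proof of this lemma, pointing back to Lemma~\ref{zero_out} with the phrase ``Just as with Lemma~\ref{zero_out}'', and your argument is precisely that transcription, with the added (and helpful) observations that the $k=0$ transition never invokes $\tilde{P}$ and that the characterization of $e_{C,C}$ is ring-insensitive. Your choice of a uniform $s=\lfloor\log_p(\tilde{m})\rfloor+1$ is in fact slightly cleaner than the paper's $Q$-dependent $s=\lfloor\log_p(\deg Q)\rfloor+1$ in the mod-$p$ proof.
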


In analogy with Theorem \ref{zero_density}:

\begin{lemma}\label{zero_density_power}
If $\tilde{P}$ is a Laurent polynomial satisfying $\tilde{P}(x_1,x_2,\ldots,x_r)^p\equiv \tilde{P}(x_1^p,x_2^p,\ldots,x_r^p)\pmod{p^a}$ and there exists some $n_0\in\N$ such that $\ct{\tilde{P}^{n_0}}\equiv 0\pmod p$, then the frequency of $0$ is $1$ in every sequence of the form $\ct{\tilde{P}^nQ}\bmod {p^a}$ for every Laurent polynomial $Q$.
\end{lemma}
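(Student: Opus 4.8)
The plan is to mirror the proof of Theorem \ref{zero_density} essentially verbatim, substituting the mod-$p^a$ analogues of every object. Since Lemma \ref{zero_density_power} assumes $\tilde{P}(x_1^p,\ldots,x_r^p)\equiv\tilde{P}(\x)^p\pmod{p^a}$, the Rowland-Zeilberger congruence $\ct{\tilde{P}^{pn+k}Q}\equiv\ct{\tilde{P}^n\cdot\Lambda_p(\tilde{P}^kQ)}\pmod{p^a}$ holds for $\tilde{P}$, and so all the linear-representation machinery of Section 3 transfers with $\F_p$ replaced by $\Z/p^a\Z$ (while the input alphabet of $\tilde\gamma$ remains $\F_p$). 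In particular we have the analogues $\tilde\gamma$, $\tilde V(n)$, and $\tilde\sigma$ with $\tilde\gamma(k)\cdot\tilde V(n)=\tilde V(pn+k)$ and $\tilde\gamma(0^s)=e_{C,C}$ for $s$ large, exactly as recorded in the lemmas immediately preceding the statement.

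First I would use the hypothesis $\ct{\tilde P^{n_0}}\equiv 0\pmod p$ together with the analogue of Lemma \ref{zero_out} to produce a \emph{zeroing word}. Setting $\beta=\vert(n_0)_p\vert$ and choosing $s$ so that $\tilde\gamma(0)^s=e_{C,C}$, I claim $\tilde\gamma(0)^s\,\tilde\gamma(n_0)\,\tilde\gamma(0)^s$ sends every $\tilde V$ to the zero vector. Indeed $\tilde\gamma(0)^s\cdot\tilde V(n_0)=\bigl(\ct{\tilde P^{n_0}}\bmod p^a\bigr)\,\tilde V(0)$ up to the $C$-entry bookkeeping, and since $p\mid\ct{\tilde P^{n_0}}$ the resulting $C$-term is divisible by $p$; applying $\tilde\gamma(0)^s$ once more collapses to the constant coefficient, which is then this $p$-divisible scalar times $\tilde V(0)$. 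The one place requiring care over the prime case is that scalars now live in $\Z/p^a\Z$ rather than a field: I cannot conclude the product is outright $0$, only that it is divisible by $p$. So the correct claim is that reading $0^s(n_0)_p0^s$ drives the constant-term output to something in $p\cdot(\Z/p^a\Z)$. This is where the main subtlety lies, and it is why the statement only asserts frequency of $0$ for $\ct{\tilde P^nQ}\bmod p^a$ rather than a literal zeroing of $\tilde\alpha$.

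To repair this I would iterate: the key observation is that $\tilde P^{p^{a-1}}\equiv\tilde P(x_1^{p^{a-1}},\ldots)\pmod{p^a}$-type stability, combined with the fact that in $\Z/p^a\Z$ any element divisible by $p$ becomes $0$ after at most $a$ multiplications by another such element, means that reading the block $0^s(n_0)_p0^s$ a total of $a$ times (i.e. concatenating $a$ copies of the zeroing word) produces a genuine $0$ in the $C$-entry mod $p^a$. More precisely, after the first block the $C$-term scalar lies in $p\Z/p^a\Z$, and each subsequent block multiplies the constant coefficient of the current polynomial state by another factor from $p\Z/p^a\Z$, so after $a$ blocks the output is $\equiv 0\pmod{p^a}$. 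Let $w$ denote this length-$a(\beta+2s)$ word; every $n$ whose base-$p$ expansion contains $w$ as a block satisfies $\ct{\tilde P^nQ}\equiv 0\pmod{p^a}$ for all $Q$, since $\ct{\tilde P^nQ}\equiv\text{vec}(Q)\cdot\tilde V(n)$ and the corresponding coordinate is annihilated.

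Finally I would run the identical counting argument as in Theorem \ref{zero_density}. Writing $L=a(\beta+2s)$ for the block length, I view each $n\in[0,p^{Lk})$ as an element of $\bigl(\F_p^{L}\bigr)^k$ via its chunked base-$p$ digits; if any one of the $k$ chunks equals $w$ then $\ct{\tilde P^nQ}\equiv 0\pmod{p^a}$. Hence at least $p^{Lk}-(p^{L}-1)^k$ of these indices give $0$, so the frequency of $0$ is bounded below by $1-\bigl((p^{L}-1)/p^{L}\bigr)^k\to 1$ as $k\to\infty$. Thus the density of $\{n:\ct{\tilde P^nQ}\equiv 0\pmod{p^a}\}$ is $1$ for every $Q$, as claimed. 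The one genuinely new ingredient relative to the mod-$p$ proof is the nilpotency-style argument collapsing the $p$-divisible scalar to $0$ after $a$ repetitions of the zeroing word; everything else is a transcription of the earlier argument with $\F_p$ replaced by $\Z/p^a\Z$.
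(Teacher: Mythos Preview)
Your proof is correct and follows essentially the same route as the paper's: the paper observes that $\tilde\gamma(0)^s\tilde\gamma(n_0)\tilde\gamma(0)^s$ is a zero-divisor multiple of $e_{C,C}$ and hence some power $t$ of it vanishes, then runs the identical block-counting density argument with block length $t(\beta+2s)$; you make the same observation but explicitly take $t=a$, which is the cleanest choice. The only imprecision is the phrase ``the corresponding coordinate is annihilated''---what actually happens (and what your nilpotency argument shows) is that the entire vector $\tilde V(n)$ is zero, not just one coordinate, so $\text{vec}(Q)\cdot\tilde V(n)=0$ for every $Q$.
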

\begin{proof}
If $p\mid \ct{\tilde{P}^{n_0}}$, then $\tilde{\gamma}(0)^s\tilde{\gamma}(n_0)\tilde{\gamma}(0)^s$ is a zero-divisor multiple of $e_{C,C}$, so that there exists some $t$ such that $\left(\tilde{\gamma}(0)^s\tilde{\gamma}(n_0)\tilde{\gamma}(0)^s\right)^{t} = 0$. Since $\ct{\tilde{P}^nQ} \equiv \text{vec}(Q)\cdot \tilde{V}(n) = \text{vec}(Q)\cdot\tilde{\gamma}(n)\cdot \tilde{V}(0)$ all $n$ such that $n_p$ contains $(0^s(n_0)_p0^s)^{t}$ must have $\ct{\tilde{P}^nQ} \equiv 0\pmod{p^a}$.\\

If $\beta = \vert (n_0)_p\vert$, then consider each $n\in \left[0, p^{t(\beta+2s)\cdot k}\right)$ as corresponding to an element of $\left(\F_p^{t(\beta+2s)}\right)^k$ via its base-$p$ representation chunked into blocks of size $t(\beta+2s)$. If any of the $k$ entries for $n$ correspond to $(0^s(n_0)_p0^s)^{t}$, then we know that $\tilde{V}(n)=0$ and $\ct{\tilde{P}^nQ}\equiv 0$, so at least $p^{t(\beta+2s)\cdot k} - \left(p^{t(\beta+2s)} - 1\right)^k$ of such $n$ have $\ct{\tilde{P}^nQ} \equiv 0$. Therefore, the frequency of $0$ in $\ct{\tilde{P}^nQ}\bmod {p^a}$ is at least $\frac{p^{t(\beta+2s)\cdot k} - \left(p^{t(\beta+2s)} - 1\right)^k}{p^{t(\beta+2s)\cdot k}} = 1 - \left(\frac{p^{t(\beta+2s)} - 1}{p^{t(\beta+2s)}}\right)^k\rightarrow 1$ as $k\rightarrow\infty$. In fact, this shows that the frequency of $0$ in $\tilde{\alpha}$ is $1$.
\end{proof}

The last thing we need before we can prove our main results is the following proposition analogous to Proposition \ref{v0_appears}:

\begin{prop}\label{v0_appears_power}
There exists $t_0\in\N$ such that for all $n\in\N$, if $p\nmid \ct{\tilde{P}^n}$ then $\tilde{\sigma}^{t_0}(\tilde{V}(n)) = x_0\tilde{V}(0)y_0$ with $x_0,y_0\in\left((\Z/p^a\Z)^{\tilde{T}}\right)^*$. That is, $\tilde{V}(0)$ is a character in $\tilde{\sigma}^{t_0}(\tilde{V}(n))$ for every such $n$.
\end{prop}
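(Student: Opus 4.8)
The plan is to mirror the proof of Proposition \ref{v0_appears} almost verbatim, with the single essential change being that the scalar we must cancel now lives in $\Z/p^a\Z$ rather than in $\F_p$, so that Fermat's Little Theorem must be replaced by Euler's theorem.

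First I would invoke the analog of Lemma \ref{zero_out} to fix $s\in\N$ with $\tilde\gamma(0)^s = e_{C,C}$, so that $\tilde\gamma(0)^s\cdot\tilde V(n) = (\tilde V(n)[C])\,\tilde V(0)$; thus the character $c\,\tilde V(0)$, where $c := \tilde V(n)[C]$, already appears in $\tilde\sigma^s(\tilde V(n))$ as the character indexed by the word $0^s$. The hypothesis $p\nmid\ct{\tilde P^n}$ says precisely that $c\not\equiv 0\pmod p$, hence $c$ is a unit in $\Z/p^a\Z$. Next I would choose $s'\in\N$ so that the $C$-entries of the characters of $\tilde\sigma^{s'}(\tilde V(0))$ realize every value taken by $\tilde V(i)[C] = \ct{\tilde P^i}\bmod p^a$ over all $i\in\N$; such an $s'$ exists because there are at most $p^a$ such values, each first occurring at a finite index. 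In particular some character $\tilde V(i)$ of $\tilde\sigma^{s'}(\tilde V(0))$ has $\tilde V(i)[C] = c$, and applying $\tilde\gamma(0)^s$ to it shows that $\tilde\sigma^{s'+s}(\tilde V(0))$ contains $c\,\tilde V(0)$.

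The engine of the argument is the observation that $\tilde\sigma^{s'+s}$ multiplies this distinguished scalar: since every character of $\tilde\sigma^{s'+s}(c'\tilde V(0))$ is $c'$ times a character of $\tilde\sigma^{s'+s}(\tilde V(0))$, the word $\tilde\sigma^{s'+s}(c'\tilde V(0))$ contains $c'c\,\tilde V(0)$ for any scalar $c'$. Iterating, I would conclude that $\tilde\sigma^{s + j(s'+s)}(\tilde V(n))$ contains $c^{j+1}\tilde V(0)$ for every $j\geq 0$. It remains to pick $j$ so that the scalar collapses to $1$, and this is where Euler's theorem enters: because $c\in(\Z/p^a\Z)^\times$ and this group has order $\phi(p^a) = p^{a-1}(p-1)$, we have $c^{\phi(p^a)}\equiv 1\pmod{p^a}$. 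Taking $j = \phi(p^a)-1$ then produces a character equal to $\tilde V(0)$ itself, so setting $t_0 := (s'+s)(\phi(p^a)-1) + s$ finishes the proof; crucially $t_0$ depends only on $P$, $p$, and $a$, not on $n$. As a sanity check, specializing to $a=1$ gives $\phi(p)=p-1$ and recovers exactly the exponent $(s'+s)(p-2)+s$ of Proposition \ref{v0_appears}.

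The one genuine departure from the mod-$p$ proof, and the only place I expect to need care, is this last step. Over $\F_p$ the scalar $c$ was a nonzero field element and Fermat's Little Theorem gave $c^{p-1}=1$ uniformly; over $\Z/p^a\Z$ the hypothesis only guarantees that $c$ is a \emph{unit}, so I must instead use the finiteness of $(\Z/p^a\Z)^\times$ to obtain a single exponent annihilating all units at once. Any common multiple of the unit orders works (for instance $\phi(p^a)$, or the exponent of the group); the point is simply that such a uniform exponent exists, which is what keeps $t_0$ independent of $n$. Everything else, namely the existence of $s$ and $s'$, the linearity of $\tilde\sigma$ in the scalar, and the fact that applying a morphism to a word containing a given character yields a word containing its image, transfers directly from the proof of Proposition \ref{v0_appears}.
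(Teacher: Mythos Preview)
Your proposal is correct and follows essentially the same route as the paper: choose $s$ with $\tilde\gamma(0)^s=e_{C,C}$, choose $s'$ so that all occurring $C$-entries appear in $\tilde\sigma^{s'}(\tilde V(0))$, observe that $\tilde\sigma^{s'+s}(\tilde V(0))$ contains $c\,\tilde V(0)$ and iterate, then invoke Euler's theorem to kill the accumulated unit $c$; you even arrive at the identical constant $t_0=(s'+s)(\varphi(p^a)-1)+s$. Your write-up is in fact slightly more explicit than the paper's about the iteration $\tilde\sigma^{s+j(s'+s)}(\tilde V(n))\ni c^{j+1}\tilde V(0)$ and about why the hypothesis $p\nmid\ct{\tilde P^n}$ makes $c$ a unit (the paper only says $c\not\equiv 0\pmod{p^a}$, which is weaker than what is actually used).
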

\begin{proof}
Let $s\in\N$ such that $\tilde{\gamma}(0)^s = e_{C,C}$. Next, fix $s'\in\N$ such that $C$-term entries (i.e., indexed by $C$) of $\tilde{\sigma}^{s'}(\tilde{V}(0))$ contain all $C$-term values of $\tilde{V}(i)$ that ever occur for any $i\in\N$. Recall that the $C$-term value of $\tilde{V}(i)$ is the constant coefficient of $\tilde{P}^i$. Note that $\tilde{V}(n)[C]\not\equiv 0\pmod{p^a}$ by our assumption that $p\nmid \ct{\tilde{P}^n}$. Finally, we have that $\tilde{\gamma}(0)^s\cdot \tilde{V}(n) = \left(\tilde{V}(n)[C]\right)\tilde{V}(0)$, $\tilde{\sigma}^{s' + s}(\tilde{V}(0))$ contains $\left(\tilde{V}(n)[C]\right)\tilde{V}(0)$, and $\left(\tilde{V}(n)[C]\right)^{\varphi(p^a)} = 1$ by Euler's theorem (where $\varphi$ is Euler's totient function). Therefore, we can conclude that $\tilde{\sigma}^{(s'+s)(\varphi(p^a)-1)+s}(\tilde{V}(n))$ contains $\tilde{V}(0)$, as desired.
\end{proof}

Note that $t_0$ does not depend on $n$. Finally, our main result:

\begin{thm}\label{main_power}
If $P$ and $Q$ are Laurent polynomials with integer coefficients and $p^a$ is a prime power, then the sequence $\ct{P^nQ}\bmod {p^a}$ is linearly recurrent if and only if $p\nmid \ct{P^n}$ for all $n\in\N$, and this can be checked for bounded $n<B_{P,p}$. When the sequence is not linearly recurrent, it is $0$ with frequency $1$. This classification is independent of $Q$, as well as the exponent $a$.
\end{thm}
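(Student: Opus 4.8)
The plan is to reduce the claim for $P$ modulo $p^a$ to the corresponding statements for $\tilde P$, for which every construction of this section is available, and then transport the conclusions back. The bridge is Proposition \ref{conversion}: writing each index as $n' = p^{a-1}n + k$ with $0\le k < p^{a-1}$, we have $\ct{P^{n'}Q}\equiv\ct{\tilde P^n Q_k'}\pmod{p^a}$, where $Q_k' := \Lambda_{p^\ell}(P^kQ)$. There are only $p^{a-1}$ such polynomials $Q_k'$, and by the remark following the degree bound for $\tilde P$ each has degree at most $\tilde m$, so each sequence $\ct{\tilde P^n Q_k'}\bmod p^a$ is exactly of the form already analyzed. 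Concretely, the target sequence $b_{n'} := \ct{P^{n'}Q}\bmod p^a$ is the interleaving, along residues modulo $p^{a-1}$, of the $p^{a-1}$ sequences $\ct{\tilde P^n Q_k'}\bmod p^a$; equivalently, $b = h(\tilde\alpha)$, where $h$ is the $p^{a-1}$-uniform morphism on $(\Z/p^a\Z)^{\tilde T}$ defined by $h(\vec u) = \code_{Q_0'}(\vec u)\code_{Q_1'}(\vec u)\cdots\code_{Q_{p^{a-1}-1}'}(\vec u)$.

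For the forward direction I would argue by contraposition. If $p\mid\ct{P^n}$ for some $n$, then Proposition \ref{check_bound} (which depends only on $P$ and $p$, not on $a$) gives such an $n<B_{P,p}$, and Lemma \ref{zero_conversion} then supplies an $n_0$ with $p\mid\ct{\tilde P^{n_0}}$. Lemma \ref{zero_density_power} applies to each $Q_k'$ and shows that $\ct{\tilde P^n Q_k'}\bmod p^a$ is $0$ with frequency $1$. Because $b$ interleaves these $p^{a-1}$ sequences over the residue classes modulo $p^{a-1}$, the density of zeros in $b$ is the average $\frac{1}{p^{a-1}}\sum_{k}1 = 1$, so $b$ is $0$ with frequency $1$ and in particular is not uniformly recurrent, hence not linearly recurrent.

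For the converse, suppose $p\nmid\ct{P^n}$ for all $n$; by the contrapositive of Lemma \ref{zero_conversion} we also have $p\nmid\ct{\tilde P^n}$ for all $n$. I would then run the converse argument of Theorem \ref{main} verbatim, with Proposition \ref{v0_appears_power} in place of Proposition \ref{v0_appears}, to conclude that $\tilde\alpha$ is uniformly recurrent. The image of a uniformly recurrent infinite word under a non-erasing (here $p^{a-1}$-uniform) morphism is again uniformly recurrent, so $b = h(\tilde\alpha)$ is uniformly recurrent; since $b$ is a constant term sequence modulo $p^a$ and therefore $p$-automatic, Theorem 10.8.4 of \cite{ShallitBook} upgrades uniform recurrence to linear recurrence. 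The independence from $Q$ and from $a$ is then visible from the fact that the dividing line is the single condition $p\nmid\ct{P^n}$, checkable for $n<B_{P,p}$.

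The genuinely new difficulty compared with the modulo-$p$ case — that Lemma \ref{RZ_cong} fails modulo $p^a$ — has already been absorbed into the definition of $\tilde P$ and Proposition \ref{conversion}, so the remaining work is to confirm that the passage $b = h(\tilde\alpha)$ faithfully transports the two relevant properties. The density transfer is a one-line averaging argument, so the step I expect to require the most care is the recurrence transfer: I would state explicitly the standard fact that uniform (non-erasing) morphisms preserve uniform recurrence, and deliberately route linear recurrence through uniform recurrence together with automaticity (via Theorem 10.8.4) rather than asserting that morphisms preserve linear recurrence directly.
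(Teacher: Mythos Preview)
Your proposal is correct and follows essentially the same route as the paper. Your morphism $h$ is precisely the paper's composite $\iota\circ\code_{P,Q}$, and both arguments reduce to showing $\tilde\alpha$ is uniformly recurrent via Proposition~\ref{v0_appears_power} and then invoking Theorem~10.8.4 of \cite{ShallitBook}; the only cosmetic differences are that the paper transfers the zero-frequency directly from $\tilde\alpha$ (rather than averaging over the $Q_k'$) and spells out the ``extend $w$ to $w'$'' step in place of citing preservation of uniform recurrence under uniform morphisms.
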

\begin{proof}
For every $k\in\Z/p^{a-1}\Z$ and Laurent polynomial $Q$, define $$\code_{Q, k}(\vec{u}) = \text{vec}(\Lambda_{p^\ell}(P^kQ))\cdot \vec{u}$$ where $\vec{u}$ is from $(\Z/p^a\Z)^{\tilde{T}}$. Then for Laurent polynomials $P$ and $Q$, define the coding $$\code_{P, Q}(\vec{u}) = (\code_{Q,0}(\vec{u}), \code_{Q,1}(\vec{u}), \ldots, \code_{Q,p^{a-1}-1}(\vec{u}))\in (\Z/p^a\Z)^{p^{a-1}}.$$ Let $\iota: \left((\Z/p^a\Z)^{p^{a-1}}\right)^*\rightarrow \left(\Z/p^a\Z\right)^*$ denote the morphism determined by the canonical injection on letters, $$\iota(j_1,j_2,\ldots,j_{p^a-1}) = j_1j_2\cdots j_{p^a-1}.$$ Then, by Proposition \ref{conversion}, the sequence $$\beta := \iota(\code_{P, Q}(\tilde{\alpha}))$$ has $\ct{P^nQ}\bmod{p^a}$ as its $n$th term.\\

If there is some $n_0\in\N$ such that $p\mid\ct{P^{n_0}}$ then Lemma \ref{zero_conversion} gives us an $n_1\in\N$ such that $p\mid\ct{\tilde{P}^{n_1}}$ so that Lemma \ref{zero_density_power} tells us that the frequency of $0$ is $1$ in $\tilde{\alpha}$ and thus $0$ also has frequency $1$ in $\beta$ so that, in particular, $\beta$ is not uniformly recurrent.\\

If there is no such $n_0$, and if $w$ appears as a word in $\beta$, then we can extend $w$ to a word $w'$ in $\beta$ corresponding to a word in $\tilde{\alpha}$. That is, $w'$ contains $w$ but starts on an index that is a multiple of $p^{a-1}$ and ends on an index one less than a multiple of $p^{a-1}$. Thus, it is sufficient to show that $\tilde{\alpha}$ is uniformly recurrent as this forces uniform recurrence for $w'$ and consequently for $w$.\\

Given a word $w$ in $\tilde{\alpha}$, choose $t_1$ such that $w$ has its first occurence ending before the index $p^{t_1}$. Thus, $w$ appears in $\tilde{\sigma}^{t_1}(\tilde{V}(0))$. We can conclude from Proposition \ref{v0_appears_power} that for all $n\in \N$, $$\tilde{\sigma}^{t_1 + t_0}(\tilde{V}(n)) = \tilde{\sigma}^{t_1}(x_0\tilde{V}(0)y_0) = \tilde{\sigma}^{t_1}(x_0)x_1wy_1\tilde{\sigma}^{t_1}(y_0).$$ In other words, $w$ appears in every $\tilde{\sigma}^{t_1 + t_0}(\tilde{V}(n))$, and since $$\tilde{\alpha} = \tilde{\sigma}^{t_1+t_0}(\tilde{\alpha}) = \tilde{\sigma}^{t_1 + t_0}(\tilde{V}(0))\tilde{\sigma}^{t_1 + t_0}(\tilde{V}(1))\tilde{\sigma}^{t_1 + t_0}(\tilde{V}(2))\cdots$$ and $\tilde{\sigma}$ is uniform (so that $\tilde{\sigma}^{t_1 + t_0}$ is also uniform), this implies that $\tilde{\alpha}$ is uniformly recurrent, and in turn $\beta$ is uniformly recurrent. Finally, Theorem 10.8.4 of \cite{ShallitBook} tells us that $p$-automatic uniformly recurrent sequences are linearly recurrent.
\end{proof}

Counterintuitively, this theorem shows that $\ct{P^nQ}\bmod {p^a}$ is either linearly recurrent or else $0$ has frequency $1$ for all values $a\geq 1$ simultaneously. Furthermore, just as in Corollary \ref{combinations}, the same generalization applies to prime powers:

\begin{cor}\label{combinations_power}
Let $F: (\Z/p^a\Z)^\ell\rightarrow X$ (which is deterministic) for some $\ell\in\N$ and let $b_n = F\left(\ct{P^{n+k_1}Q_1}\bmod {p^a}, \ldots, \ct{P^{n+k_\ell}Q_\ell}\bmod {p^a}\right)$ where the $k_i$ are arbitrary non-negative integers and the $Q_i$ are arbitrary Laurent polynomials. Then $b_n$ is linearly recurrent if and only if for all $n\in\N$, $p\nmid \ct{P^n}$. Furthermore, in the case where there exists $n$ such that $p\mid \ct{P^n}$, then the frequency of $F(0,\ldots,0)$ in $b_n$ is $1$.
\end{cor}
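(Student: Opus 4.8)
The plan is to deduce the statement from the corresponding facts about $\tilde{\alpha}$ proved in Theorem \ref{main_power}, in complete analogy with how Corollary \ref{combinations} followed from the proof of Theorem \ref{main}. The one genuinely new feature is that the passage from $\tilde{\alpha}$ to the sequences $\ct{P^mQ_i}\bmod p^a$ is no longer a coding but a $p^{a-1}$-uniform morphism, so I must keep track of the residue $m\bmod p^{a-1}$. First I would record the local description of $b_n$: by Proposition \ref{conversion}, if $m=p^{a-1}j+k$ with $0\le k<p^{a-1}$, then $\ct{P^mQ_i}\equiv\code_{Q_i,k}(\tilde{V}(j))\pmod{p^a}$, where $\code_{Q_i,k}$ is the linear coding from the proof of Theorem \ref{main_power} and $j=\lfloor m/p^{a-1}\rfloor$. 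Since the shifts $k_i$ are fixed and nonnegative, the block indices $\lfloor(n+k_i)/p^{a-1}\rfloor$ all lie in a window $[\lfloor n/p^{a-1}\rfloor,\lfloor n/p^{a-1}\rfloor+c]$ of bounded length $c+1$ (with $c$ depending only on the $k_i$), while the residues $(n+k_i)\bmod p^{a-1}$ depend only on $n\bmod p^{a-1}$. Hence there is a fixed function $G$ with $b_n=G\!\left(n\bmod p^{a-1},\,\tilde{\alpha}[\lfloor n/p^{a-1}\rfloor],\ldots,\tilde{\alpha}[\lfloor n/p^{a-1}\rfloor+c]\right)$, so a length-$L$ word of $b$ starting at $n_0$ is determined by the phase $n_0\bmod p^{a-1}$ together with an $\tilde{\alpha}$-window of length at most $\lceil L/p^{a-1}\rceil+c+1$ starting at block index $\lfloor n_0/p^{a-1}\rfloor$.

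For the forward direction, assume $p\nmid\ct{P^n}$ for all $n$, so that the proof of Theorem \ref{main_power} already shows $\tilde{\alpha}$ is uniformly recurrent. I would promote this to uniform recurrence of $b_n$ using the local description, being careful to keep the phase consistent. Given an occurrence of a $b$-word $u$ at position $n_0$, with phase $\phi=n_0\bmod p^{a-1}$ and associated $\tilde{\alpha}$-window $W$, uniform recurrence of $\tilde{\alpha}$ reproduces $W$ starting at block index $\lfloor n_0/p^{a-1}\rfloor+d$ for some $0<d\le C_W$; then $n_0'=n_0+p^{a-1}d$ satisfies $n_0'\equiv n_0\pmod{p^{a-1}}$ and realigns the window, so $G$ forces $u$ to reappear at $n_0'$. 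Because only finitely many windows $W$ of the relevant length occur in $\tilde{\alpha}$, the bound $C_u=p^{a-1}\max_W C_W$ is finite and independent of the occurrence, giving uniform recurrence of $b_n$. Since $b_n$ is $p$-automatic (being a deterministic function of finitely many shifts of the $p$-automatic sequences $\ct{P^mQ_i}\bmod p^a$), Theorem 10.8.4 of \cite{ShallitBook} upgrades this to linear recurrence.

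For the converse, suppose $p\mid\ct{P^{n_0}}$ for some $n_0$. Then Lemma \ref{zero_conversion} yields $n_1$ with $p\mid\ct{\tilde{P}^{n_1}}$, and Lemma \ref{zero_density_power} gives that the zero vector has frequency $1$ in $\tilde{\alpha}$. Linearity of each $\code_{Q_i,k}$ shows that whenever the window $[\lfloor n/p^{a-1}\rfloor,\lfloor n/p^{a-1}\rfloor+c]$ of $\tilde{\alpha}$ is entirely zero we have $b_n=F(0,\ldots,0)$. The set of block indices carrying an all-zero window has density $1$ (its complement is a finite union of translates of a density-zero set), and since $n\mapsto\lfloor n/p^{a-1}\rfloor$ is $p^{a-1}$-to-one on initial intervals, the density of $n$ with $b_n=F(0,\ldots,0)$ is likewise $1$; in particular $b_n$ is not uniformly recurrent, hence not linearly recurrent. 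The main obstacle, and the only genuine departure from Corollary \ref{combinations}, is the bookkeeping of the phase $n\bmod p^{a-1}$ introduced by the $p^{a-1}$-uniform passage from $\tilde{\alpha}$; what defuses it is the elementary observation that shifting $n$ by a multiple of $p^{a-1}$ preserves the phase, so recurrences of $\tilde{\alpha}$-windows convert directly into phase-consistent recurrences of $b_n$.
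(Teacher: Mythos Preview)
Your argument is correct and follows the route the paper intends: it states Corollary \ref{combinations_power} with no separate proof beyond the remark that ``just as in Corollary \ref{combinations}, the same generalization applies to prime powers,'' so the implied argument is precisely to pull back to $\tilde{\alpha}$, use its uniform recurrence (respectively, the frequency-$1$ zero set) established in the proof of Theorem \ref{main_power}, and push forward through the codings $\code_{Q_i,k}$. You have made explicit the one wrinkle the paper glosses over---that the map $\tilde{\alpha}\mapsto(\ct{P^mQ_i}\bmod p^a)_m$ is $p^{a-1}$-uniform rather than a coding, so one must preserve the phase $n\bmod p^{a-1}$ when transporting recurrences---and your fix (recur in $\tilde{\alpha}$ and shift $n$ by $p^{a-1}d$) is exactly right.

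One small caveat: your final sentence in the converse direction (``in particular $b_n$ is not uniformly recurrent'') is not literally forced by frequency $1$ alone, since a constant sequence $b_n\equiv F(0,\ldots,0)$ would have frequency $1$ and still be linearly recurrent; the paper's own proof of Corollary \ref{combinations} avoids this by only asserting the ``Furthermore'' clause in that case. This is a quirk of the corollary's ``if and only if'' phrasing rather than a defect in your argument, but you may want to soften that inference to match.
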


Lastly, using the same construction of $\beta$ as in the proof of Theorem \ref{main_power}, we have the analogous result to Proposition \ref{recurrent} where we can refer to $P$ and not $\tilde{P}$ because we have Lemma \ref{zero_conversion}:

\begin{prop}
If $P$ and $Q$ are Laurent polynomials and $p^a$ is a prime power, and there exists some $n_0>0$ such that $\ct{P^{n_0}}\bmod p\neq 0$, then the sequence $b_n = \ct{P^nQ}\bmod {p^a}$ is recurrent.
\end{prop}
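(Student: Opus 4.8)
The plan is to transcribe the proof of Proposition \ref{recurrent} into the prime-power setting, working with $\tilde{\alpha}$ and the morphism $\tilde{\sigma}$ in place of $\alpha$ and $\sigma$, and then transporting recurrence from $\tilde{\alpha}$ to the sequence $\beta = \iota(\code_{P, Q}(\tilde{\alpha}))$ constructed in the proof of Theorem \ref{main_power}, whose $n$th term is $\ct{P^nQ}\bmod p^a$. The first step is to convert the hypothesis on $P$ into one on $\tilde{P}$: the proof of Lemma \ref{zero_conversion} shows directly that $p\nmid\ct{P^{n_0}}$ implies $p\nmid\ct{\tilde{P}^{n_0}}$, so the same index $n_0 > 0$ witnesses $p\nmid\ct{\tilde{P}^{n_0}}$. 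Proposition \ref{v0_appears_power} then furnishes a $t_0$ (independent of the index) such that $\tilde{\sigma}^{t_0}(\tilde{V}(n_0))$ contains the character $\tilde{V}(0)$.

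Second, I would run the recurrence argument on $\tilde{\alpha}$. Given any word $w$ occurring in $\tilde{\alpha}$, choose $t_2$ so that the chosen occurrence ends before index $p^{t_2}$; since $\tilde{\sigma}$ is $p$-uniform, $w$ then appears in $\tilde{\sigma}^{t_2}(\tilde{V}(0))$, and applying $\tilde{\sigma}^{t_2}$ to $\tilde{\sigma}^{t_0}(\tilde{V}(n_0)) = x_0\tilde{V}(0)y_0$ shows that $w$ appears in $\tilde{\sigma}^{t_2+t_0}(\tilde{V}(n_0))$. Expanding $\tilde{\alpha} = \tilde{\sigma}^{t_2+t_0}(\tilde{\alpha}) = \tilde{\sigma}^{t_2+t_0}(\tilde{V}(0))\tilde{\sigma}^{t_2+t_0}(\tilde{V}(1))\cdots$ locates the original occurrence of $w$ inside the block $\tilde{\sigma}^{t_2+t_0}(\tilde{V}(0))$ and a genuinely later occurrence inside the block $\tilde{\sigma}^{t_2+t_0}(\tilde{V}(n_0))$, which is strictly later precisely because $n_0 > 0$. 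Hence $w$ recurs and $\tilde{\alpha}$ is recurrent.

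Finally, I would pass from $\tilde{\alpha}$ to $b_n$. Since $\beta = \iota(\code_{P, Q}(\tilde{\alpha}))$ is the image of $\tilde{\alpha}$ under a uniform morphism (the coding $\code_{P, Q}$ followed by the $p^{a-1}$-uniform morphism $\iota$), recurrence is preserved: any word in $\beta$ is a factor of the image of some word $u$ of $\tilde{\alpha}$, and because the morphism is uniform the infinitely many recurrences of $u$ in $\tilde{\alpha}$ produce aligned recurrences of its image, and hence of the word, in $\beta$. As $\beta$ has $\ct{P^nQ}\bmod p^a$ as its $n$th term, $b_n$ is recurrent.

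The main obstacle --- really the only nontrivial point --- is the first step, namely ensuring the hypothesis survives the passage from $P$ to $\tilde{P}$; this is exactly the content of Lemma \ref{zero_conversion} (and the reason the statement can be phrased in terms of $P$ rather than $\tilde{P}$), after which everything is a direct mirror of Proposition \ref{recurrent}, together with the standard fact that a uniform morphic image of a recurrent sequence is recurrent. As in Proposition \ref{recurrent}, the obstruction to \emph{uniform} recurrence remains the dependence of $t_2$ on the location of the occurrence of $w$, which allows arbitrarily large gaps between the constructed occurrences.
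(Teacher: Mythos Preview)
Your proposal is correct and matches the paper's intended approach exactly: the paper does not spell out a proof but simply notes that one uses the construction of $\beta$ from Theorem \ref{main_power}, mirrors the argument of Proposition \ref{recurrent} with $\tilde{\sigma}$, $\tilde{V}$, $\tilde{\alpha}$ in place of $\sigma$, $V$, $\alpha$, and invokes Lemma \ref{zero_conversion} to pass the hypothesis from $P$ to $\tilde{P}$. Your explicit verification that the \emph{proof} of Lemma \ref{zero_conversion} yields $p\nmid\ct{P^{n_0}}\Rightarrow p\nmid\ct{\tilde{P}^{n_0}}$ with the same positive index $n_0$, together with the standard extension-to-aligned-blocks step for transferring recurrence through the uniform morphism $\iota\circ\code_{P,Q}$, fills in precisely the details the paper leaves implicit.
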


\section{Application to Representation Theory}
In this section, we briefly interpret Theorem \ref{main_power} (or more precisely, Corollary \ref{combinations_power}) in the context of sequential tensor powers of semisimple Lie algebra representations. This context is a great source of constant term sequences.\\

Let $\mathfrak{g}$ be a semisimple Lie algebra, and let $V$ be a representation of $\mathfrak{g}$ whose formal character is $P\in \Z\left[\x,\x^{-1}\right]$. If $W$ is an irreducible representation of $\mathfrak{g}$, then let $a_{W, n}$ denote the multiplicity of $W$ in the direct sum decomposition of $V^{\otimes n}$, which has character $P^n$. It is a consequence of the Weyl Integral Formula (see, for example, Section IV.1 of \cite{reptheory}) that a multiple of $a_{W, n}$ is a constant term sequence; in particular, if $Q$ is the product of the formal character of $W$ with the square of the Weyl denominator, then $C\cdot a_{W,n} = \ct{P^nQ}$, where $C$ is the order of the Weyl group. The following theorem follows as a result of Corollary \ref{combinations_power}.

\begin{thm} Let $\mathfrak{g}$ be a semisimple Lie algebra whose irreducible representations are $\{W_i\}$ and let $V$ be a representation of $\mathfrak{g}$ whose formal character is $P\in\Z\left[\x,\x^{-1}\right]$. Let $a_{i,n}$ denote the multiplicity of $W_i$ in the direct sum decomposition of $V^{\otimes n}$. Then for each prime $p$, either all of the sequences $(a_{i,n}\bmod {p^a})_{n\in\N}$ are simultaneously linearly recurrent for every $a\geq 1$ and every $i$ or else all of those sequences simultaneously have $0$ with frequency $1$. In particular, the sequences are linearly recurrent if and only if there exists an $n_0<B_{P,p}$ such that $p\mid \ct{P^{n_0}}$.
\end{thm}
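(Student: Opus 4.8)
The plan is to reduce the theorem to Corollary~\ref{combinations_power} applied to the constant term sequences $\ct{P^nQ_i}\bmod p^b$. The discussion preceding the theorem already supplies the bridge $C\cdot a_{i,n} = \ct{P^nQ_i}$, where $C$ is the order of the Weyl group and $Q_i$ is the formal character of $W_i$ times the square of the Weyl denominator. If $C$ were coprime to $p$ we would be done immediately: $C$ would be invertible modulo $p^a$, so $a_{i,n}\equiv C^{-1}\ct{P^nQ_i}\pmod{p^a}$ would be a fixed scalar multiple of a constant term sequence, and the claim would be a direct instance of Corollary~\ref{combinations_power} with $\ell=1$. The entire difficulty is therefore concentrated in the case $p\mid C$, where we cannot simply divide modulo $p^a$.

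To handle this, I would factor $C = p^e C'$ with $\gcd(C',p)=1$. Since each multiplicity $a_{i,n}$ is a nonnegative integer, the identity $\ct{P^nQ_i} = p^e C' a_{i,n}$ shows that $p^e$ divides $\ct{P^nQ_i}$ for every $n$. The key observation is that this division by $p^e$ can be recovered from the constant term sequence computed one power of $p$ higher. Concretely, I would define a deterministic map $F\colon \Z/p^{a+e}\Z\to\Z/p^a\Z$ sending any multiple $x$ of $p^e$ to $(C')^{-1}(x/p^e)\bmod p^a$, and sending non-multiples of $p^e$ to $0$ (a case that never actually arises). Because $C'$ is invertible modulo $p^a$, and $x/p^e\bmod p^a$ is determined by $x\bmod p^{a+e}$ whenever $p^e\mid x$, we obtain $a_{i,n}\equiv F\!\left(\ct{P^nQ_i}\bmod p^{a+e}\right)\pmod{p^a}$; note in particular that $F(0)=0$.

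Now I would invoke Corollary~\ref{combinations_power} with the prime power $p^{a+e}$, taking $\ell=1$, $k_1=0$, $Q_1=Q_i$, and the function $F$ above. This yields that $a_{i,n}\bmod p^a$ is linearly recurrent precisely when $p\nmid\ct{P^n}$ for all $n$, and that otherwise the value $F(0)=0$ occurs with frequency $1$. The point that makes the statement \emph{simultaneous} is that this dividing line---the existence of some $n$ with $p\mid\ct{P^n}$---is a property of $P$ and $p$ alone: it is independent of $i$ (hence of $Q_i$), of the exponent $a$, and of the auxiliary shift $e$. Consequently all of the sequences $(a_{i,n}\bmod p^a)_{n\in\N}$ fall on the same side of the dichotomy at once. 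Finally, Proposition~\ref{check_bound} shows this condition is decidable by inspecting only indices $n<B_{P,p}$, giving the stated finite check.

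I expect the main obstacle to be exactly the non-invertibility of $C$ modulo $p$; everything else is bookkeeping. The resolution rests on two facts working in tandem: the integrality of representation multiplicities, which forces $p^e\mid\ct{P^nQ_i}$ \emph{uniformly in} $n$, and the insensitivity of the classification in Corollary~\ref{combinations_power} both to the exponent of the modulus and to post-composition by a fixed deterministic function. The one point I would verify carefully is that the ``frequency of $F(0,\dots,0)$'' conclusion of Corollary~\ref{combinations_power} transports to a statement about the frequency of $0$ here, which it does precisely because $F(0)=0$.
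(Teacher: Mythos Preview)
Your argument is correct and follows the paper's own route, which is simply to invoke Corollary~\ref{combinations_power}; indeed the paper offers no details beyond that citation. Your treatment is strictly more careful than the paper's, since you explicitly address the case $p\mid C$ by lifting to the modulus $p^{a+e}$ and using a deterministic $F$ with $F(0)=0$, a point the paper leaves to the reader.
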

\pagebreak

\bibliographystyle{plain}
\bibliography{LinRepForConstantTerms}{}

\end{document}